\documentclass[10.5pt, reqno]{amsart}
\usepackage[a4paper, margin=30mm]{geometry}
\usepackage{amsmath}
\usepackage{amssymb}
\usepackage{amsthm}
\usepackage{mathrsfs}
\usepackage{url}
\newtheorem{dfn}{Definition}[section]
\newtheorem{thm}[dfn]{Theorem}
\newtheorem{prop}[dfn]{Proposition}
\newtheorem{lem}[dfn]{Lemma}
\newtheorem{rem}[dfn]{Remark}
\newtheorem{cor}[dfn]{Corollary}
\newtheorem{exm}[dfn]{Example}
\usepackage{color}
\usepackage[bookmarks=false,draft=false,breaklinks,colorlinks]{hyperref}

\begin{document}

\title{The Riemannian Median Of Positive-Definite Matrices}
\author{Yutaro Nakagawa}
\address{
Graduate School of Mathematics, Nagoya University, 
Furocho, Chikusaku, Nagoya, 464-8602, Japan
}
\email{yutaro-nakagawa@ymail.ne.jp}

\maketitle


\begin{abstract}
Using Landers and Rogge's work \cite{Lan81} partially, 
we define the Riemannian median $M(\mathbb{A})$ of a tuple of positive-definite matrices $\mathbb{A}=(A_{1}, \cdots, A_{n})$ as a positive-definite matrix, 
not as a set unlike Yang's work \cite{Yan10}. 
Then, in the Riemannian manifold of positive-definite matrices with the trace metric, we show 
\[
\delta(M, \Lambda) \leq \frac{1}{n}\sum_{k=1}^{n}\delta(A_{k}, \Lambda) \leq \sqrt{\frac{1}{n} \sum_{k=1}^{n} \delta(A_{k}, \Lambda)^{2}},  
\]
where $M=M(\mathbb{A})$, $\Lambda$ is the Karcher mean of $\mathbb{A}$,  
and $\delta$ is the Riemannian distance induced by the trace metric.  
This inequality is an analogue of $|\mu-m| \leq \sigma$, where $\mu$, $m$ and $\sigma$ are the mean, the median and the standard deviation 
of real-valued data points. 
Moreover, we investigate the commutative case, 
how outliers have an effect on the Riemannian median, 
the congruence invariance, the joint homogeneity, the self-duality and the monotonicity. 
\end{abstract}


\section{Introduction}

Let $\mathbb{P}_{d}$ denote all the $d \times d$ positive-definite matrices and 
let $\mathbb{A}=(A_{1}, \cdots, A_{n}) \in (\mathbb{P}_{d})^{n}$. 
It is well known that $\mathbb{P}_{d}$ admits the structure of a Riemannian manifold and it allows us to define the length of curves on $\mathbb{P}_{d}$, 
which induces the complete Riemannian distance $\delta$. 
By using this geometric structure on $\mathbb{P}_{d}$, 
the {\it Karcher mean} of $\mathbb{A}=(A_{1}, \cdots, A_{n})$ is defined by a unique minimizer of the function 
\[
F_{2}(X ; \mathbb{A})=\frac{1}{n}\sum_{k=1}^{n}\delta(X, A_{k})^{2}. 
\]
In fact, $\mathbb{P}_{d}$ has the structure of a CAT$(0)$ space \cite[Chapter II]{Bri99}, 
and therefore, the Karcher mean is uniquely determined. 
The Karcher mean was proposed by Moakher in 2005 \cite{Moa05} and by Bhatia and Holbrook in 2006 \cite{Bha062} as a geometric mean of matrices. 
See e.g., \cite{Moa05}, \cite{Bha062} and \cite{Lim122} for more details.  

It seems natural to consider the function 
\[
F_{1}(X ; \mathbb{A})=\frac{1}{n}\sum_{k=1}^{n}\delta(X, A_{k}). 
\]
and regard its minimizer as the ``median'' of $A_{1}, \cdots, A_{n}$. 
In \cite{Bac014}, Ba\v{c}\'{a}k investigated the median in a Hadamard space with emphasis on an algorithm for computing it. 
In \cite{Yan10}, Yang considered the function $f(x)=\int_{M} d_{M}(x, p) \mu(dp)$ 
for a probability measure $\mu$ with appropriate conditions on a Riemannian manifold $M$ equipped with a complete Riemannian distance $d_{M}$, 
and defined the Riemannian median. 
Here is an important notice that the uniqueness of minimizer of the function $F_{1}$, defining the median as expected, unfortunately does not hold in general, 
and hence, the median is usually defined as a ``set''. 
At least for matrices, 
it is more convenient to define the median as a matrix rather than a set; 
however, to the best of the author's knowledge, there has been no such attempt so far. 

The main purposes of these notes are
\begin{itemize}
  \item to investigate in details the cases where the uniqueness of the Riemannian median holds and where it does not (Section 3), 
  \item to define the Riemannian median as a single positive-definite matrix (Section 3) and 
  \item to examine several properties of the Riemannian median (Section 4). 
\end{itemize}
In Section 4, we will investigate several aspects; 
the commutative case, how outliers have an effect on the Riemannian median, 
the congruence invariance, the joint homogeneity, the self-duality and the monotonicity in a special case,  
and will give a counter example showing that the monotonicity of the Riemannian median does not hold in general. 


\section{Preliminaries}

\subsection{CAT$(0)$ space} 

In this subsection, 
we review some basic facts on CAT$(0)$ spaces. 
See e.g., \cite{Bri99} for more details.  

Let $(X, d)$ be a metric space. 
A {\it geodesic} from $x \in X$ to $y \in X$ is a map $\gamma : (\mathbb{R} \supset) [0, l] \to X$ 
such that $\gamma(0)=x$, $\gamma(l)=y$ and $d(\gamma(t), \gamma(t^{\prime}))=|t-t^{\prime}|$ for all $t, t^{\prime} \in [0, l]$. 
A {\it geodesic segment} with endpoints $x$ and $y$ is the image of $\gamma$ and denoted by $[x, y]$.
A subset $C \subset X$ is said to be {\it convex} if for every pair of $x, y \in C$ can be joined by a geodesic in X and the image of every such geodesic is contained in $C$. 
$(X, d)$ is said to be a (uniquely) geodesic metric space if any two points in $X$ are joined by a (unique) geodesic. 

Here is the definition of a CAT$(0)$ space. 

\begin{dfn}[{\cite[Chapter II, Exercises 1.9]{Bri99}}]
Let $(X, d)$ be a geodesic metric space. 
We call $X$ a CAT$(0)$ space, if
\[
d(p, q)^{2}+d(p, r)^{2} \geq 2d(m, p)^{2}+\frac{1}{2}d(q, r)^{2}
\]
for all $p, q, r \in X$ and all $m \in X$ with $d(q, m)=d(r, m)=d(q, r)/2$. 
\end{dfn}

Let $(X, d)$ be a CAT$(0)$ space and $p, q, r$ be three points in $X$. 
It is shown that there are points $\overline{p}, \overline{q}, \overline{r} \in \mathbb{R}^{2}$ such that 
$d(p, q)=d_{\mathbb{R}^{2}}(\overline{p}, \overline{q})$, $d(q, r)=d_{\mathbb{R}^{2}}(\overline{q}, \overline{r})$ and $d(r, p)=d_{\mathbb{R}^{2}}(\overline{r}, \overline{p})$, where $d_{\mathbb{R}^{2}}$ is the Euclidean distance on $\mathbb{R}^{2}$ ({\cite[Chapter I, Definition 1.10, Lemma 2.14]{Bri99}}). 
The triangle with vertices $\overline{p}, \overline{q}, \overline{r}$, denoted by $\overline{\triangle}(p, q, r)$, 
is called a {\it comparison triangle} for the triple $(p, q, r)$. 
It is uniquely determined up to isometries of $\mathbb{R}^{2}$ (\cite[Chapter I, Lemma 2.14]{Bri99}). 
The interior angle of $\overline{\triangle}(p, q, r)$ at $\overline{p}$ 
is called a {\it comparison angle} between $q$ and $r$ at $p$ and is denoted by $\overline{\angle}_{p}(q, r)$. 
The comparison triangles enables us to define the Alexandrov angle ({\cite[Chapter I, Definition 1.12]{Bri99}}).
Let $c : [0, a] \to X$ and $c^{\prime} : [0, a^{\prime}] \to X$ be two geodesics with $c(0)=c^{\prime}(0)$. 
The {\it Alexandrov angle} between $c$ and $c^{\prime}$ is the number $\angle(c, c^{\prime}) \in [0, \pi]$ defined by 
\[
\angle(c, c^{\prime})=\limsup_{t, t^{\prime} \to 0}\overline{\angle}_{c(0)}(c(t), c^{\prime}(t^{\prime})). 
\]
We introduce the notion of an orthogonal projection. 

\begin{prop}[{\cite[Chapter II, Proposition 2.4]{Bri99}}]\label{orthoproj}
Let $(X, d)$ be a CAT$(0)$ space, and let $C$ be a convex subset which is complete in the induced metric. 
Then, 
\begin{enumerate}
  \item for every $x \in X$, there exists a unique point $\mathcal{P}(x) \in X$ such that $d(x, \mathcal{P}(x))=d(x, C):=\inf_{y \in C}d(x, y)$; 
  \item given $x \notin C$ and $y \in C$, if $y \neq \mathcal{P}(x)$ then $\angle_{\mathcal{P}(x)}(x, y) \geq \pi/2$.
\end{enumerate}
\end{prop}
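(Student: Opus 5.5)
We prove the two assertions of Proposition~\ref{orthoproj} in turn. Both rest on the CAT$(0)$ inequality of the definition, used as a parallelogram-law substitute, together with completeness of $C$. Throughout, fix $x\in X$ and put $D=d(x,C)$.

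\textbf{Existence and uniqueness.} Take a minimizing sequence $y_n\in C$ with $d(x,y_n)\to D$. For each $m,n$, let $m_{mn}$ be the midpoint of the geodesic $[y_m,y_n]$; it lies in $C$ by convexity, so $d(x,m_{mn})\ge D$. Applying the CAT$(0)$ inequality with $p=x$, $q=y_m$, $r=y_n$ gives
\[
\tfrac12 d(y_m,y_n)^2\le d(x,y_m)^2+d(x,y_n)^2-2D^2 .
\]
The right-hand side tends to $0$, so $(y_n)$ is Cauchy. By completeness of $C$ it converges to some $\mathcal P(x)\in C$, and continuity of $d$ gives $d(x,\mathcal P(x))=D$. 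For uniqueness, suppose $y,y'\in C$ both realize $D$. The same inequality yields $\tfrac12 d(y,y')^2\le 2D^2-2D^2=0$, so $y=y'$.

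\textbf{The angle estimate.} Let $x\notin C$, so $D>0$, and let $y\in C$ with $y\ne \mathcal P(x)=:\pi_0$. Let $c$ be the geodesic from $\pi_0$ to $x$ and $c'$ the geodesic from $\pi_0$ to $y$. Suppose, for contradiction, that $\angle(c,c')<\pi/2$.

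By the definition of the Alexandrov angle as a $\limsup$ of comparison angles, there is $\varepsilon>0$ such that $\overline\angle_{\pi_0}(c(t),c'(t'))<\pi/2-\varepsilon$ for all sufficiently small $t,t'>0$. Fix such a small $t>0$ and let $t'\to0$. The points $c'(t')$ lie in $C$ by convexity, so $d(c(t),c'(t'))\ge d(c(t),C)$.

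We claim $d(c(t),C)=d(\pi_0,c(t))=t$. Indeed, for any $z\in C$,
\[
d(x,z)\le d(x,c(t))+d(c(t),z)=D-t+d(c(t),z),
\]
and $d(x,z)\ge D$ forces $d(c(t),z)\ge t$; equality holds at $z=\pi_0$. Hence $d(c(t),c'(t'))\ge t$.

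On the other hand, the Euclidean law of cosines in the comparison triangle gives
\[
d(c(t),c'(t'))^2=t^2+t'^2-2tt'\cos\overline\angle ,
\]
and $\cos\overline\angle>\sin\varepsilon$. So for $t'$ small relative to $t$,
\[
d(c(t),c'(t'))^2\le t^2-2tt'\sin\varepsilon+t'^2<t^2 .
\]
This contradicts $d(c(t),c'(t'))\ge t$, so $\angle(c,c')\ge\pi/2$.

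\textbf{Main obstacle.} The delicate point is the $\limsup$ in the angle definition. A $\limsup$ only guarantees a bound along some pairs $(t,t')$, not all. Our assumption that the angle is $<\pi/2$ does bound all small comparison angles by $\pi/2-\varepsilon$, because a $\limsup$ less than $\pi/2$ means eventually every value is below $\pi/2-\varepsilon$. We therefore choose $t$ first, from that eventual range, and then take $t'$ small, also within that range. If this ordering of the limits proved troublesome, we would instead use the known fact that in CAT$(0)$ spaces comparison angles are monotone in $t,t'$, so the Alexandrov angle is an honest limit.
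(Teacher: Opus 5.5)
Your proof is correct and is essentially the standard argument of \cite[Chapter II, Proposition 2.4]{Bri99}, which the paper cites rather than reproves. The CAT$(0)$ midpoint inequality makes a minimizing sequence Cauchy and forces uniqueness in $C$, which is what the statement's ``$\mathcal{P}(x)\in X$'' is meant to say. The angle bound then follows because points of $[\mathcal{P}(x),x]$ still project to $\mathcal{P}(x)$, combined with the law of cosines in small comparison triangles, and your handling of the $\limsup$ (fix $t$ in the uniform range, then take $t'\ll t$) is sound, so no monotonicity of comparison angles is needed.
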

We call the map $\mathcal{P} : X \to C, \ x \mapsto \mathcal{P}(x)$ the {\it orthogonal projection} onto $C$.

Since a CAT$(0)$ space is a uniquely geodesic metric space (\cite[Chapter II, Proposition 1.4]{Bri99}), 
for any three points $p, q, r$ in CAT$(0)$, 
we can consider a {\it geodesic triangle} with vertices $p, q, r$ and edges $[p, q]$, $[q, r]$ and $[r, p]$. 
This geodesic triangle is denoted by $\triangle([p, q], [q, r], [r, p])$, or simply $\triangle(p, q, r)$. 
The following proposition plays an important role in our discussion below. 

\begin{prop}[{\cite[Chapter II, Proposition 1.7]{Bri99}}]\label{Alxcmp}
Let $X$ be a CAT$(0)$ space. 
Then, the Alexandrov angle between the sides of any geodesic triangle in $X$ with distinct vertices is no greater than the 
angle between the corresponding sides of its comparison triangle in $\mathbb{R}^{2}$. 
\end{prop}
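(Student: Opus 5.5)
The plan follows the standard argument of Bridson--Haefliger: reduce the statement about Alexandrov angles to a comparison of angles in Euclidean triangles via the CAT$(0)$ inequality. Let $\triangle(p,q,r)$ be a geodesic triangle with distinct vertices, with sides $c=[p,q]$ and $c'=[p,r]$ issuing from $p$, parametrized by arc length. Let $\overline{\triangle}(\overline{p},\overline{q},\overline{r})$ be its comparison triangle. We must show $\angle(c,c')\le \overline{\angle}_p(q,r)$. Since $\angle(c,c')$ is a $\limsup$ of comparison angles $\overline{\angle}_{p}(c(t),c'(t'))$, it suffices to prove the monotonicity statement
\[
\overline{\angle}_{p}(c(t),c'(t')) \le \overline{\angle}_{p}(q,r) \qquad (0<t\le d(p,q),\ 0<t'\le d(p,r)).
\]

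The key step is a comparison of distances. For $x=c(t)\in[p,q]$ and $y=c'(t')\in[p,r]$, let $\overline{x}\in[\overline{p},\overline{q}]$ and $\overline{y}\in[\overline{p},\overline{r}]$ be the comparison points with the same distances from $\overline{p}$. I will show $d(x,y)\le d_{\mathbb{R}^2}(\overline{x},\overline{y})$. Given this, the triangle $\overline{\triangle}(p,x,y)$ has the same two sides $t,t'$ at $\overline{p}$ as the Euclidean triangle $(\overline{p},\overline{x},\overline{y})$, and a third side no longer. The law of cosines then gives $\overline{\angle}_p(x,y)\le\angle_{\overline{p}}(\overline{x},\overline{y})=\overline{\angle}_p(q,r)$, which proves the claim.

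To obtain the distance comparison I will proceed in two stages, moving one point at a time.

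First I prove the one-point version. For a triangle $(p,q,r)$ and a point $x\in[q,r]$ with comparison point $\overline{x}$, we have $d(p,x)\le d(\overline{p},\overline{x})$. The definition given in the paper yields this only at midpoints. Iterating at dyadic points of $[q,r]$ extends it to dyadic parameters, using a convexity-type argument: compare with the Euclidean function $s\mapsto |\overline{p}-\overline{\gamma}(s)|^2$, which satisfies the midpoint equality. Continuity then gives it for all $x$.

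I expect this step to be the main obstacle. Midpoint inequalities at subtriangles do not immediately compose, because the comparison triangles of subtriangles differ from the original one. What I will try first is to show, by induction on dyadic level, that $f(s)=d(p,\gamma(s))^2$ satisfies $f(\tfrac{a+b}{2})\le \tfrac12 f(a)+\tfrac12 f(b)-\tfrac14 (b-a)^2\ell^2$. Here $\ell=d(q,r)$, and the inequality is exactly the midpoint inequality applied to the subsegment. This says $f(s)-\ell^2 s^2$ is midpoint convex, hence convex by continuity. Comparing with the Euclidean function, for which equality holds and which agrees with $f$ at $s=0,1$, then gives $f\le \overline f$.

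Second, I apply the one-point version twice. Apply it to the triangle $(p,q,r)$ with $x\in[p,q]$ to get $d(x,r)\le |\overline{x}-\overline{r}|$. Now consider the triangle $(p,x,r)$ with $y\in[p,r]$, and its comparison triangle $(\tilde p,\tilde x,\tilde r)$. Its sides $d(p,x)$ and $d(p,r)$ equal those of $(\overline{p},\overline{x},\overline{r})$, and its third side is no longer. Hence its angle at $\tilde p$ is at most the angle at $\overline{p}$ (law of cosines). Applying the one-point version in this triangle gives $d(x,y)\le|\tilde x-\tilde y|$. Since the two Euclidean triangles share side lengths $t,t'$ at the apex with a smaller angle in the first, $|\tilde x-\tilde y|\le|\overline{x}-\overline{y}|$. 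This finishes the distance comparison and therefore the proof.
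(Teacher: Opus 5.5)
Your argument is correct and is the standard one behind the cited result: the paper gives no proof of its own, only the reference to Bridson--Haefliger II.1.7. You reproduce that chain (one-point comparison $\Rightarrow$ two-point comparison by applying it twice $\Rightarrow$ monotonicity of comparison angles via the law of cosines $\Rightarrow$ bound on the $\limsup$), and your Stage 1 supplies the bridge from the paper's midpoint definition of CAT$(0)$ (their Exercise II.1.9) to the one-point comparison. One simplification: no induction on dyadic levels is needed in Stage 1, because the midpoint inequality applies directly to every subsegment $[\gamma(a),\gamma(b)]$, whose midpoint is $\gamma(\frac{a+b}{2})$ by constant speed; this makes $f(s)-\ell^{2}s^{2}$ midpoint convex at once, and continuity plus agreement at $s=0,1$ with the affine function $\overline{f}(s)-\ell^{2}s^{2}$ finishes that step.
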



\subsection{Jensen's inequality for a probability measure on a global NPC space} 

In this subsection, 
we give a brief exposition of the Jensen's inequality for a probability measure on a global NPC space. 

Let $(N, d)$ be a geodesic space. 
A function $f : N \to \mathbb{R}$ is called {\it convex} if the function $f \circ \gamma : [0, l] \to \mathbb{R}$ is convex for each geodesic $\gamma : [0, l] \to N$. 

\begin{dfn}[{\cite[Definition 2.1]{Str03}}]
A metric space $(N, d)$ is called a global NPC space if it is complete 
and if for each pair of points $x_{0}, x_{1} \in N$ there exists a point $y \in N$ 
with the property that for all points $z \in N$:
\[
d(z, y)^{2} \leq \frac{d(z, x_{0})^{2}+d(z, x_{1})^{2}}{2}-\frac{1}{4}d(x_{0}, x_{1})^{2}. 
\] 
\end{dfn}

It is well known that the space $(\mathbb{P}_{d}, \delta)$ in the last subsection is a global NPC space. 

Let $(N, d)$ be a global NPC space 
and let $\mathcal{P}(N)$ denote the set of probability measures on $(N, \mathcal{B}(N))$ with separable support, 
where $\mathcal{B}(N)$ is the Borel $\sigma$-algebra over $N$. 
For a fixed $\theta \in [0, \infty)$, 
$\mathcal{P}^{\theta}(N)$ denotes the set of $p \in \mathcal{P}(N)$ with $\int_{N} d(x, y)^{\theta} p(dy)<+\infty$ for all $x \in N$. 
Obviously, $\mathcal{P}^{\theta}(N) \subset \mathcal{P}^{1}(N)$ for all $\theta \in [0, \infty)$. 

We recall the notion of {\it barycenter}, which may be regarded as a generalization of the Karcher mean. 
It is shown that there exists a unique point which minimizes the function 
$N \to [0, \infty), \ z \mapsto \int_{N} d(z, x)^{2} p(dx)$ for each $p \in \mathcal{P}^{2}(N)$ (\cite[Proposition 4.3]{Str03}). 
The point is called the {\it barycenter} of $p$ and denoted by $b(p)$.

\begin{prop}[{\cite[Theorem 6.2]{Str03}}]\label{Jensen}
Let $(N, d)$ be a global NPC space. 
For any lower semicontinuous convex function $f : N \to \mathbb{R}$ and $p \in \mathcal{P}^{1}(N)$, 
\[
f(b(p)) \leq \int_{N} f(x) p(dx)
\] 
provided the right-hand side is well-defined. 
\end{prop}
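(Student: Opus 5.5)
Since this proposition is quoted from \cite[Theorem 6.2]{Str03}, I would reconstruct Sturm's argument, which obtains Jensen's inequality from a law of large numbers for the \emph{inductive means}. The idea is to approximate $b(p)$ by points that are built from samples of $p$ by successive geodesic interpolation. Convexity of $f$ passes through each interpolation step, lower semicontinuity passes to the limit, and the classical strong law of large numbers on $\mathbb{R}$ handles the right-hand side.

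\textbf{Step 1 (inductive means).} Let $X_{1}, X_{2}, \dots$ be i.i.d.\ $N$-valued random variables with law $p$. Set $S_{1}=X_{1}$, and let $S_{n}$ be the point on the geodesic $[S_{n-1}, X_{n}]$ at fraction $1/n$ of the way from $S_{n-1}$. Geodesics are unique since $N$ is global NPC. Convexity of $f$ along this geodesic gives
\[
f(S_{n}) \leq \Bigl(1-\tfrac{1}{n}\Bigr) f(S_{n-1})+\tfrac{1}{n} f(X_{n}),
\]
and induction then yields $f(S_{n}) \leq \frac{1}{n}\sum_{k=1}^{n} f(X_{k})$ for every $n$.

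\textbf{Step 2 (law of large numbers).} For $p\in\mathcal{P}^{2}(N)$, I would show $d(S_{n}, b(p))\to 0$ in probability, then pass to an almost surely convergent subsequence. The key estimate comes from the NPC inequality in the definition of a global NPC space, applied with $z=b(p)$ and the point on $[S_{n-1},X_{n}]$ at parameter $1/n$; the weighted version of that inequality has to be used here. Taking conditional expectations and using the variance inequality
\[
\int \bigl(d(z,x)^{2}-d(b(p),x)^{2}\bigr)p(dx) \geq d(z,b(p))^{2}
\]
gives a recursion of the form
\[
\mathbb{E}\,d(S_{n},b(p))^{2} \leq \Bigl(1-\tfrac{1}{n}\Bigr)^{2}\mathbb{E}\,d(S_{n-1},b(p))^{2}+\tfrac{1}{n^{2}}\int d(b(p),x)^{2}p(dx).
\]
This recursion yields $\mathbb{E}\,d(S_{n},b(p))^{2}\leq \frac{1}{n}\int d(b(p),x)^{2}p(dx)\to 0$.

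\textbf{Step 3 (conclusion for $\mathcal{P}^{2}$).} Since $\int f\,dp$ is well defined, the real strong law of large numbers gives $\frac{1}{n}\sum_{k=1}^{n} f(X_{k})\to\int f\,dp$ almost surely; this also covers the value $+\infty$. Along the subsequence from Step 2, lower semicontinuity of $f$ together with Step 1 gives
\[
f(b(p))\leq\liminf f(S_{n})\leq\int f\,dp.
\]

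\textbf{Step 4 (extension to $\mathcal{P}^{1}$).} For $p\in\mathcal{P}^{1}(N)$ the barycenter is defined as the minimizer of $z\mapsto\int\bigl(d(z,x)^{2}-d(y,x)^{2}\bigr)p(dx)$ for a fixed $y$. I would approximate $p$ by measures $p_{j}\in\mathcal{P}^{2}(N)$ obtained by truncation, for instance by replacing the mass outside a large ball $B(y,R_{j})$ with a point mass at $y$, so that $\int f\,dp_{j}\to\int f\,dp$. The extension then needs two facts:
\begin{itemize}
\item the $1$-Wasserstein contraction $d(b(p),b(q))\leq W_{1}(p,q)$ from \cite{Str03}, which gives $b(p_{j})\to b(p)$;
\item lower semicontinuity of $f$, which then gives $f(b(p))\leq\liminf f(b(p_{j}))\leq\lim\int f\,dp_{j}$.
\end{itemize}

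\textbf{Main obstacle.} I expect Step 4 to be the hardest part. It is not obvious how to truncate so that $\int f\,dp_{j}\to\int f\,dp$ holds for a general convex $f$, because the value $f(y)$ gets weighted by the moved mass $p(N\setminus B(y,R_{j}))$. One option is to choose $y$ with $f(y)$ finite and use dominated convergence. Another is to apply the truncation to the empirical measures of the sample itself, using $d(S_{n},b(p))\to 0$ directly via the $W_{1}$-contraction.
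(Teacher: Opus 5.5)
The paper gives no proof of this proposition; it only cites \cite[Theorem 6.2]{Str03}. Your Steps 1--3 are correct:
\begin{enumerate}
\item the convexity induction is fine;
\item the recursion does come out as $a_n\le(1-\frac1n)^2a_{n-1}+V/n^2$ with $a_1=V$, which gives $n^2a_n\le nV$;
\item the real strong law, combined with lower semicontinuity along an almost surely convergent subsequence, yields the inequality for every $p\in\mathcal{P}^{2}(N)$.
\end{enumerate}
That already covers the only case the paper uses, namely $p=\frac1n\sum_k\delta_{A_k}$ and $f=\delta(M,\cdot)$ in the proof of \eqref{main}.

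The gap is in Step 4, where you invoke the contraction $d(b(p),b(q))\le W_1(p,q)$ from \cite{Str03}. A standard proof of that property is itself an application of Jensen's inequality: apply it on $N\times N$ to the convex function $(x,y)\mapsto d(x,y)$ and a coupling of $p$ and $q$. Unless you check that the version you quote is proved independently, your passage to $\mathcal{P}^{1}(N)$ is circular.

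You can make it independent using two ingredients. The first is the variance inequality you already use; it holds on $\mathcal{P}^{1}(N)$ for the normalized functional. The second is Reshetnyak's quadruple inequality
\[
d(x_1,x_3)^2+d(x_2,x_4)^2\le d(x_2,x_3)^2+d(x_1,x_4)^2+2d(x_1,x_2)\,d(x_3,x_4).
\]
Set $a=b(p)$, $c=b(q)$ and $g=d(c,\cdot)^2-d(a,\cdot)^2$. Add the variance inequalities for $p$ at $c$ and for $q$ at $a$, then apply the quadruple inequality with $(x_1,x_2,x_3,x_4)=(c,a,x,x')$. For any coupling $\mu$ of $p$ and $q$ this gives
\[
2d(a,c)^2\le\int g\,dp-\int g\,dq=\int\bigl(g(x)-g(x')\bigr)\,\mu(dx,dx')\le 2d(a,c)\int d(x,x')\,\mu(dx,dx').
\]
Hence $d(a,c)\le W_1(p,q)$ without any appeal to Jensen, and your truncation argument then closes.

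The obstacle you anticipate does not occur. Since $f$ is real-valued, $f(y)\,p(N\setminus B(y,R_j))\to0$ for every $y$. Moreover, $\int_{B(y,R_j)}f\,dp\to\int f\,dp$ by monotone convergence applied separately to $f^{+}$ and $f^{-}$, at least one of which is integrable because the right-hand side is well defined.
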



\section{Definition of the Riemannian median}

Yang \cite[Definition 1]{Yan10} defined a Riemannian median 
of a probability measure satisfying certain conditions on a complete Riemannian manifold. 
However, it was defined as a set there. 
In \cite{Lan81}, 
Landers and Rogge introduced a notion of the ``natural median'' of a random variable using a limit of $L^{s}$-mean with $s>1$.  
In this section, based in part on their ideas, we define a Riemannian median of positive-definite matrices as a single positive-definite matrix.   

Before giving its definition, we show some lemmas necessary to define our Riemannian median. 
See e.g., \cite{Bha062} or \cite[Chapter 6]{Bha07} for more details of the Riemannian geometry on $\mathbb{P}_{d}$. 
We define maps $F_{p}(\cdot ; \mathbb{A}) : \mathbb{P}_{d} \to [0, \infty)$ by 
\[
F_{p}(X ; \mathbb{A})=\frac{1}{n}\sum_{k=1}^{n}\delta(X, A_{k})^{p}
\]
for $\mathbb{A}=(A_{1}, \cdots, A_{n}) \in (\mathbb{P}_{d})^{n}$ and $p \geq 1$. 
It is easy to see that $F_{p}(\cdot ; \mathbb{A})$ with $p>1$ has a unique minimizer $M_{p}(\mathbb{A})$ 
by the convexity of $\delta$ and the strictly convexity of $t^{p}$ on $[0, \infty)$. 
We begin by investigating minimizers of $F_{1}(\cdot ; \mathbb{A})$. 

\begin{lem}[{\cite[Lemma 2.3]{Bac014}}]\label{pfofexmnmzr}
A coercive convex lsc real-valued function on a Hadamard space has a minimizer.
\end{lem}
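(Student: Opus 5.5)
The plan is the direct method of the calculus of variations, carried out in the Hadamard space $(H,d)$: complete, so we may use Cauchy sequences, and CAT$(0)$. Let $f:H\to\mathbb{R}$ be convex, lower semicontinuous and coercive. Coercive means $f(x)\to+\infty$ as $d(x,x_0)\to\infty$ for some (equivalently every) base point $x_0$. Set $m=\inf_H f$.

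First I show $m>-\infty$ and that a minimizing sequence can be kept in a bounded set. By coercivity there is $R>0$ such that $f(x)>f(x_0)$ whenever $d(x,x_0)>R$. Hence
\[
\inf_H f=\inf_{\overline{B}(x_0,R)} f .
\]
To see that $f$ is bounded below on $\overline{B}(x_0,R)$, I would not use compactness, since balls need not be compact in a Hadamard space. Instead I argue by contradiction. Suppose $x_k\in\overline{B}(x_0,R)$ with $f(x_k)\to-\infty$. Choose the point $y_k\in[x_0,x_k]$ at distance $\min(1,d(x_0,x_k))$ from $x_0$. By convexity along the geodesic, $f(y_k)$ is bounded above by a convex combination of $f(x_0)$ and $f(x_k)$ in which the weight on $f(x_k)$ is at least $1/R$. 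So $f(y_k)\to-\infty$ while $y_k$ stays in $\overline{B}(x_0,1)$. This alone gives no contradiction.

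The cleaner route is to bypass this issue with the sublevel-set argument below. That argument yields a point $\bar x$ with $f(\bar x)\le\liminf f(x_k)$, and since $f$ is real-valued this already excludes $\liminf f(x_k)=-\infty$.

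The key step is to extract a convergent object from a bounded minimizing sequence $(x_k)$ with $f(x_k)\to m$. I would use the sublevel sets $C_t=\{x: f(x)\le t\}$ for $t>m$. They are nonempty, convex (by convexity of $f$), closed (by lower semicontinuity), and bounded (by coercivity). In a complete CAT$(0)$ space, a decreasing family of nonempty bounded closed convex sets has nonempty intersection.

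I would prove this via Proposition~\ref{orthoproj}. Take $t_j\downarrow m$, and let $z_j=\mathcal{P}_{C_{t_j}}(x_0)$. The distances $r_j=d(x_0,z_j)$ are nondecreasing and bounded, since $C_{t_j}$ is bounded, so they converge. For $i>j$ we have $z_i\in C_{t_j}$. The obtuse-angle property in Proposition~\ref{orthoproj}(2), combined with Proposition~\ref{Alxcmp}, or directly the CAT$(0)$ midpoint inequality applied to $x_0,z_i,z_j$ with midpoint in $C_{t_j}$, gives
\[
d(z_i,z_j)^2\le 2\bigl(r_i^2-r_j^2\bigr)\to 0 .
\]
So $(z_j)$ is Cauchy and converges to some $\bar x$ by completeness.

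Since each $C_{t_j}$ is closed and contains the tail of the sequence, $\bar x\in\bigcap_j C_{t_j}$. Thus $f(\bar x)\le t_j$ for all $j$, so $f(\bar x)\le m$. This gives both $m>-\infty$ and that $\bar x$ is a minimizer.

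The main obstacle is this Cauchy estimate. Concretely, I apply the CAT$(0)$ inequality with $p=x_0$, $q=z_i$, $r=z_j$. The midpoint $w$ of $[z_i,z_j]$ lies in $C_{t_j}$ by convexity, so $d(x_0,w)\ge r_j$. This yields
\[
r_i^2+r_j^2\ge 2r_j^2+\tfrac12 d(z_i,z_j)^2,
\]
which is the desired bound.
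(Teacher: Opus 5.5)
Your proof is correct. The paper gives no argument for this lemma; it simply quotes it from Ba\v{c}\'{a}k, so there is no internal proof to compare against.

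What you wrote is a standard self-contained argument. The sublevel sets $C_t$ with $t>\inf f$ are nonempty, closed, convex and bounded. The projections $z_j=\mathcal{P}_{C_{t_j}}(x_0)$ exist by Proposition~\ref{orthoproj}(1), because closed subsets of a complete space are complete. The CAT$(0)$ inequality applied to $x_0,z_i,z_j$, with the midpoint of $[z_i,z_j]$ lying in $C_{t_j}$, gives $d(z_i,z_j)^2\le 2(r_i^2-r_j^2)$. Hence $(z_j)$ is Cauchy and its limit lies in $\bigcap_j C_{t_j}$. This is the Hilbert-space parallelogram argument for nested bounded closed convex sets.

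Another common route uses weak ($\Delta$-)compactness of bounded sets together with weak lower semicontinuity of convex lsc functions. That route is shorter once the weak-convergence machinery is available, and it also shows that every minimizing sequence has a subsequence weakly converging to a minimizer. Your argument needs much less: only completeness, the CAT$(0)$ inequality and Proposition~\ref{orthoproj}(1), without even the angle statement (2).

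Two remarks on presentation.
\begin{enumerate}
\item Delete the opening attempt to bound $f$ below on $\overline{B}(x_0,R)$, and the unused minimizing sequence $(x_k)$. The sublevel-set argument runs verbatim with $t_j\downarrow-\infty$ when $\inf f=-\infty$, and then gives $f(\bar x)\le t_j$ for all $j$, which is impossible for real-valued $f$. State that case this way explicitly, rather than through $f(\bar x)\le\liminf f(x_k)$.
\item In the paper's only application, $f=F_1(\cdot;\mathbb{A})$ on $\mathbb{P}_d$, closed balls are compact, as used in the proof of Lemma~\ref{mplimexists}. There your first paragraph already finishes by the Weierstrass theorem on $\overline{B}(x_0,R)$. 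Your more careful argument is what a general Hadamard space requires.
\end{enumerate}
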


Recall that the {\it $t$-weighted geometric mean} of $A, B \in \mathbb{P}_{d}$ is defined by 
\[
A \sharp_{t} B:=A^{1/2}(A^{-1/2}BA^{-1/2})^{t}A^{1/2}, \ t \in [0, 1].
\]
See e.g., \cite{And042} for more details. 
By Lemma $\ref{pfofexmnmzr}$, 
it is shown that $F_{1}(\cdot ; \mathbb{A})$ has a minimizer.
The next lemma is immediate. 

\begin{lem}\label{geolem}
Let $A, B \in \mathbb{P}_{d}$ with $A \neq B$ and $D=\delta(A, B)$. 
Then, a curve 
\[
\gamma : [0, D] \to \mathbb{P}_{d}, \ \gamma(t)=A \#_{t/D} B
\]
by means of weighted geometric mean is a geodesic in the sense of subsection 2.1, 
\end{lem}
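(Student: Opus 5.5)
We need $\gamma(0)=A$, $\gamma(D)=B$, and $\delta(\gamma(s),\gamma(t))=|s-t|$ for all $s,t\in[0,D]$. The endpoint conditions are immediate: $A\#_0 B = A$ and $A\#_1 B = B$. For the isometry condition we use the explicit formula for the trace-metric distance,
\[
\delta(X,Y)=\bigl\|\log(X^{-1/2}YX^{-1/2})\bigr\|_{2},
\]
where $\|\cdot\|_2$ is the Frobenius norm. Two standard facts will be used: $\delta$ is invariant under congruences $X\mapsto CXC^{*}$ with $C$ invertible, and $\delta(I,Y)=\|\log Y\|_2$ (see \cite[Chapter 6]{Bha07}).

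\textbf{Reduction.} Put $C=A^{-1/2}$ and $Z=A^{-1/2}BA^{-1/2}$. Then
\[
C\,\gamma(t)\,C^{*}=Z^{t/D}.
\]
By congruence invariance, $D=\delta(A,B)=\delta(I,Z)=\|\log Z\|_2$. Since $A\neq B$, we have $Z\neq I$, so $D>0$.

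\textbf{Distance along the curve.} For $s,t\in[0,D]$, the matrices $Z^{s/D}$ and $Z^{t/D}$ commute, because both are functions of $Z$. Hence
\[
Z^{-s/(2D)}\,Z^{t/D}\,Z^{-s/(2D)}=Z^{(t-s)/D}.
\]
Combining this with the reduction and congruence invariance gives
\[
\delta(\gamma(s),\gamma(t))=\bigl\|\log Z^{(t-s)/D}\bigr\|_2=\frac{|t-s|}{D}\,\|\log Z\|_2=|t-s|.
\]
This is exactly the defining property of a geodesic in the sense of subsection 2.1.

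\textbf{Main point to check.} The only step needing care is the identity $\log Z^{r}=r\log Z$ for real $r$. It holds because $Z$ is positive definite and can be diagonalized by a unitary, so powers and the logarithm act on its positive eigenvalues. Everything else follows from congruence invariance of $\delta$.
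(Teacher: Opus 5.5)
Your proof is correct. You reduce by the congruence $X\mapsto A^{-1/2}XA^{-1/2}$ to the curve $Z^{t/D}$, use that powers of $Z$ commute, and apply $\|\log Z^{r}\|_2=|r|\,\|\log Z\|_2$, which gives $\delta(\gamma(s),\gamma(t))=|s-t|$ for all $s,t\in[0,D]$, exactly the condition of subsection 2.1. The paper gives no argument beyond calling the lemma immediate, and yours is the standard verification that word presumably refers to.
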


We next discuss the uniqueness of minimizer of $F_{1}(\cdot ; \mathbb{A})$. 
We begin by investigating the case where $A_{1}, \cdots, A_{n}$ do not lie on a common geodesic. 
This case was discussed and shown by Yang in \cite{Yan10}. 

\begin{lem}[{\cite[Theorem 3.1]{Yan10}}]\label{YangLem}
If $A_{1}, \cdots, A_{n}$ do not lie on a common geodesic, 
then a minimizer of $F_{1}(\cdot ; \mathbb{A})$ is unique. 
\end{lem}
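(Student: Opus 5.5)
We argue by contradiction using convexity along the geodesic joining two minimizers. Suppose $X \neq Y$ are both minimizers of $F_{1}(\cdot ; \mathbb{A})$, and let $\gamma : [0, D] \to \mathbb{P}_{d}$, $D=\delta(X, Y)$, be the geodesic $\gamma(t)=X \#_{t/D} Y$ of Lemma \ref{geolem}. Each function $t \mapsto \delta(\gamma(t), A_{k})$ is convex on $[0, D]$, since $(\mathbb{P}_{d}, \delta)$ is CAT$(0)$ and distance functions to points are convex there. Hence $F_{1}(\gamma(t) ; \mathbb{A})$ is convex in $t$. It equals the minimum value at $t=0$ and $t=D$, so it is constant on $[0, D]$. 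A sum of convex functions that is affine forces every summand to be affine. Therefore each $t \mapsto \delta(\gamma(t), A_{k})$ is affine on $[0, D]$.

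The key step is to show that affinity of $t \mapsto \delta(\gamma(t), A)$ forces $A$ to lie on the geodesic line through $X$ and $Y$. Here I would use the strict negative curvature available in the Riemannian manifold $\mathbb{P}_{d}$, not just the CAT$(0)$ inequality. Take a point $\gamma(t_{0})$ with $A \neq \gamma(t_{0})$. The Riemannian distance $\rho=\delta(\cdot, A)$ is smooth away from $A$. Its Hessian along $\dot\gamma$ vanishes exactly when $\dot\gamma$ is parallel to the radial direction $\nabla\rho$, by comparison with Euclidean space through Jacobi fields.

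More concretely, in the CAT$(0)$ language: if $A \notin$ the geodesic line, we have the comparison triangle with vertices $\gamma(0)$, $\gamma(D)$, $A$. Affinity of the distance means equality holds in the CAT$(0)$ convexity inequality. By the flat triangle lemma (Bridson--Haefliger II.2.9), the convex hull of the triangle is then isometric to a flat Euclidean triangle. In a Euclidean triangle, $t \mapsto |\gamma(t)-A|$ is affine only if $A$ is collinear with the side, i.e.\ $A$ lies on the extension of $[X, Y]$.

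Thus every $A_{k}$ lies on the complete geodesic extending $[X, Y]$. This geodesic is $t \mapsto X^{1/2}(X^{-1/2}YX^{-1/2})^{s}X^{1/2}$ for $s \in \mathbb{R}$, and it is unique by unique geodesicity. So $A_{1}, \cdots, A_{n}$ lie on a common geodesic, contradicting the hypothesis. A degenerate case must be checked: when $A_{k}$ equals a point of $\gamma$, the distance $|t-t_{0}|$ is not affine unless that point is an endpoint, and then $A_{k}$ is on the line anyway.

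The main obstacle is the rigidity step, namely passing from ``the distance along a geodesic is affine'' to ``the point is collinear with it''. Two routes are available. One is to cite the flat triangle lemma, combined with the observation that a flat Euclidean triangle still forces collinearity. The other is a direct Jacobi field computation in $\mathbb{P}_{d}$, using that sectional curvatures are nonpositive and geodesics are unique. Either way one must also check that being collinear with the extended geodesic in $\mathbb{P}_{d}$ means lying on the bi-infinite geodesic $s \mapsto X \#_{s} Y$, $s \in \mathbb{R}$. This follows because geodesics in the complete manifold $\mathbb{P}_{d}$ extend uniquely.
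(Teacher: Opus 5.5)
Your argument is correct. The paper gives no proof of its own, only a citation of Yang's Theorem 3.1, and your argument is essentially the standard proof behind that result: two distinct minimizers make $F_{1}$ constant along the joining geodesic, so every $t \mapsto \delta(\gamma(t), A_{k})$ is affine, and strict convexity of the distance to a point off the geodesic line then puts all $A_{k}$ on one geodesic.

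Two small corrections. First, $\mathbb{P}_{d}$ is not strictly negatively curved, since commuting matrices form flats such as $C_{U}$; this does not hurt you, because your Hessian comparison $\nabla^{2}\rho(v, v) \geq \rho^{-1}\|v^{\perp}\|^{2}$ needs only nonpositive curvature. Second, the flat triangle lemma is unnecessary, since affinity turns $\delta(\gamma(t), A) \leq d_{\mathbb{R}^{2}}(\overline{\gamma(t)}, \overline{A}) \leq (1-t/D)\delta(X, A)+(t/D)\delta(Y, A)$ into equalities, which already forces the comparison triangle to be degenerate.
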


We now turn to the case where $A_{1}, \cdots, A_{n}$ lie on a common geodesic, 
in other words, 
the case where there exist $i, j \in \{1, \cdots, n\}$ with the property that 
for all $k=1, \cdots, n$, there exists $t_{k} \in [0, 1]$ such that $A_{k}=A_{i} \#_{t_{k}} A_{j}$. 
Without loss of generality, 
we may assume that $i=1$, $j=n$, $t_{k} \leq t_{k+1}$ for all $k=1, \cdots, n-1$, $t_{1}=0$ and $t_{n}=1$. 

\begin{lem}\label{minongamma}
If $A_{1}, \cdots, A_{n}$ lie on a common geodesic $\gamma : [0, l] \to \mathbb{P}_{d}$, 
then $F_{1}(\cdot ; \mathbb{A})$ has its minimizer(s) on $\gamma([0, l])$.   
\end{lem}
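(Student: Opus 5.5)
Let $C=\gamma([0,l])$. Since $\gamma$ is a geodesic, $C$ is a convex subset of $\mathbb{P}_{d}$ (geodesics are unique in a CAT$(0)$ space), and it is compact, hence complete in the induced metric. We may therefore use the orthogonal projection $\mathcal{P}:\mathbb{P}_{d}\to C$ of Proposition \ref{orthoproj}. The plan is to show that projecting onto $C$ never increases any of the distances $\delta(X,A_{k})$. From this, $F_{1}(\mathcal{P}(X);\mathbb{A})\le F_{1}(X;\mathbb{A})$ for every $X$. Given any minimizer $X_{0}$ of $F_{1}(\cdot;\mathbb{A})$, which exists by Lemma \ref{pfofexmnmzr}, the point $\mathcal{P}(X_{0})\in C$ is then also a minimizer. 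This proves the statement: $F_{1}$ attains its minimum on $\gamma([0,l])$.

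The key step is the inequality $\delta(\mathcal{P}(X),A_{k})\le\delta(X,A_{k})$ for $X\notin C$ and each $A_{k}\in C$. If $A_{k}=\mathcal{P}(X)$, then $\delta(\mathcal{P}(X),A_{k})=0$ and there is nothing to prove. Otherwise, write $P=\mathcal{P}(X)$ and consider the geodesic triangle $\triangle(X,P,A_{k})$, whose vertices are distinct. Proposition \ref{orthoproj}(2) gives $\angle_{P}(X,A_{k})\ge\pi/2$. Proposition \ref{Alxcmp} says the Alexandrov angle is at most the comparison angle, so $\overline{\angle}_{P}(X,A_{k})\ge\pi/2$ as well. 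In the Euclidean comparison triangle the angle at $\overline{P}$ is then non-acute, so the law of cosines gives
\[
\delta(X,A_{k})^{2}\ \ge\ \delta(X,P)^{2}+\delta(P,A_{k})^{2}\ \ge\ \delta(P,A_{k})^{2}.
\]
Hence $\delta(P,A_{k})\le\delta(X,A_{k})$, with strict inequality whenever $X\ne P$. Summing over $k$ gives $F_{1}(P;\mathbb{A})\le F_{1}(X;\mathbb{A})$.

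In fact the inequality shows more: for $X\notin C$ we get $F_{1}(\mathcal{P}(X);\mathbb{A})<F_{1}(X;\mathbb{A})$. So every minimizer lies on $\gamma([0,l])$, which is a slightly stronger reading of the lemma and is worth recording for the uniqueness analysis that follows. The only points needing care are the hypotheses of Proposition \ref{orthoproj}: convexity of $C$, which follows from uniqueness of geodesics, and its completeness, which follows from compactness as the continuous image of $[0,l]$. With those in place the argument goes through, so I expect no serious obstacle.
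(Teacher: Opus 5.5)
Your proof is correct and is essentially the paper's argument: project onto $C=\gamma([0,l])$, use Proposition \ref{orthoproj}(2) and Proposition \ref{Alxcmp} to get a non-acute comparison angle at $\mathcal{P}(X)$, and apply the law of cosines to get $\delta(\mathcal{P}(X),A_k)<\delta(X,A_k)$ for every $k$. The paper argues by contradiction for a minimizer $M\notin C$, whereas you argue directly and also check the convexity and completeness of $C$ that the paper leaves implicit; this is a cosmetic difference, not a different route.
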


\begin{proof}
Suppose that a minimizer $M=M_{1}(\mathbb{A})$ of $F_{1}(\cdot ; \mathbb{A})$ does not belong to $\gamma([0, l])$ and 
consider a geodesic triangle $\triangle(M, \mathcal{P}(M), A_{k})$ for each $k=1, \cdots, n$,  
where $\mathcal{P}$ is the orthogonal projection onto $C:=\gamma([0, l])$. 
We show that $\delta(M, A_{k}) \geq \delta(\mathcal{P}(M), A_{k})$ for each $k=1, \cdots, n$.  
It is enough to consider the case where $\mathcal{P}(M) \neq A_{k}$.   

By the properties of $\mathcal{P}$ given in \cite[Proposition 2.4]{Bri99}, 
$\angle_{\mathcal{P}(M)}(M, A_{k}) \geq \pi/2$. 
Hence, by Proposition $\ref{Alxcmp}$, $\pi/2 \leq \angle_{\mathcal{P}(M)}(M, A_{k}) \leq \overline{\angle}_{\mathcal{P}(M)}(M, A_{k})$.  
Applying the law of cosines to the comparison triangle $\overline{\triangle}(M, \mathcal{P}(M), A_{k})$, 
we have 
\begin{align*}
&d_{\mathbb{R}^{2}}(\overline{M}, \overline{A_{k}})^{2} \\
&=d_{\mathbb{R}^{2}}(\overline{\mathcal{P}(M)}, \overline{A_{k}})^{2}+d_{\mathbb{R}^{2}}(\overline{\mathcal{P}{(M)}}, \overline{M})^{2}
-\{2d_{\mathbb{R}^{2}}(\overline{\mathcal{P}(M)}, \overline{A_{k}})d_{\mathbb{R}^{2}}(\overline{\mathcal{P}{(M)}}, \overline{M}) \\ 
&\qquad\qquad\qquad\qquad\qquad\qquad\qquad\qquad\quad \ \times \cos{\overline{\angle}_{\mathcal{P}(M)}(M, A_{k})}\} \\
&\geq d_{\mathbb{R}^{2}}(\overline{\mathcal{P}(M)}, \overline{A_{k}})^{2}+d_{\mathbb{R}^{2}}(\overline{\mathcal{P}{(M)}}, \overline{M})^{2} 
\quad ({\rm since} \  \cos\overline{\triangle}(M, \mathcal{P}(M), A_{k}) \leq 0) \\
&>d_{\mathbb{R}^{2}}(\overline{\mathcal{P}(M)}, \overline{A_{k}})^{2}
\end{align*}
for any $k=1, \cdots, n$,  
where $d_{\mathbb{R}^{2}}$ is the Euclidean distance. 
By the definition of a comparison triangle and since $k$ is arbitrary, 
we have $\delta(M, A_{k})>\delta(\mathcal{P}(M), A_{k})$ for all $k=1, \cdots, n$.  
Hence, we have obtained $F(\mathcal{P}(M))<F(M)$, 
a contradiction to the fact that $M$ is a minimizer of $F_{1}(\cdot ; \mathbb{A})$. 
\end{proof}

Just as we distinguish cases based on the parity of the number of data points when considering the median of a finite set of real numbers, 
we must treat the even and odd cases of $n$ separately in the following two lemmas. 
Let $[A, B]$ denote the geodesic joining $A \in \mathbb{P}_{d}$ to $B \in \mathbb{P}_{d}$ with respect to the trace metric. 

\begin{lem}\label{odd}
If $A_{1}, \cdots, A_{n}$ lie on a common geodesic and $n=2m-1$ for some positive integer $m$, 
then then a minimizer of $F_{1}(\cdot ; \mathbb{A})$ is unique. 
\end{lem}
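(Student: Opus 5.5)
By Lemma \ref{minongamma}, every minimizer of $F_{1}(\cdot ; \mathbb{A})$ lies on the common geodesic. I will therefore reduce the problem to a one-dimensional one and then identify the minimizer as the middle point $A_{m}$.

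\textbf{Reduction to an interval.} Parametrize the geodesic as in Lemma \ref{geolem}: put $D=\delta(A_{1}, A_{n})$ and $\gamma(s)=A_{1}\#_{s/D}A_{n}$ for $s\in[0,D]$. (If $D=0$, all $A_{k}$ coincide and the claim is trivial.) Then $A_{k}=\gamma(s_{k})$ with $s_{k}=t_{k}D$ and $0=s_{1}\le s_{2}\le\cdots\le s_{n}=D$. Since $\gamma$ is a unit-speed geodesic, $\delta(\gamma(s),\gamma(s^{\prime}))=|s-s^{\prime}|$. Hence for $X=\gamma(s)$ we have
\[
F_{1}(X ; \mathbb{A})=\frac{1}{n}\sum_{k=1}^{n}|s-s_{k}|=:g(s).
\]

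\textbf{The one-dimensional problem.} Here $g$ is the classical median function of the reals $s_{1}\le\cdots\le s_{2m-1}$. I will show it has the unique minimizer $s_{m}$ by pairing terms. For each $k=1,\dots,m-1$,
\[
|s-s_{k}|+|s-s_{n+1-k}|\ge s_{n+1-k}-s_{k},
\]
with equality exactly when $s\in[s_{k},s_{n+1-k}]$. The unpaired middle term satisfies $|s-s_{m}|\ge0$, with equality exactly when $s=s_{m}$. Summing gives
\[
g(s)\ge \frac{1}{n}\sum_{k=1}^{m-1}(s_{n+1-k}-s_{k})=g(s_{m}),
\]
and equality forces $s=s_{m}$. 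Thus $s_{m}$ is the unique minimizer of $g$ on $[0,D]$.

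\textbf{Conclusion.} Because $\gamma$ is injective (it is an isometric embedding), the unique minimizer of $F_{1}(\cdot ; \mathbb{A})$ restricted to $\gamma([0,D])$ is $\gamma(s_{m})=A_{m}$. Combined with Lemma \ref{minongamma}, this shows that any global minimizer equals $A_{m}$, which proves uniqueness.

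\textbf{Main obstacle.} The only delicate point is applying Lemma \ref{minongamma} correctly. It must be read as saying that \emph{every} minimizer lies on the geodesic, not merely that some minimizer does. Its proof supports this reading: it shows that any minimizer off $C$ would have a strictly smaller competitor $\mathcal{P}(M)$, a contradiction. That argument applies to an arbitrary minimizer, so it applies to every minimizer. I also need that $C=\gamma([0,D])$ is a complete convex set, so that $\mathcal{P}$ is defined. This holds because $C$ is the compact image of a geodesic in the uniquely geodesic space $\mathbb{P}_{d}$.
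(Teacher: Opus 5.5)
Your proof is correct and follows essentially the same route as the paper. You use Lemma \ref{minongamma} to confine every minimizer to the geodesic, reduce via Lemma \ref{geolem} to $\frac{1}{n}\sum_{k}|s-s_{k}|$, pair the outer terms $|s-s_{k}|+|s-s_{n+1-k}|$, and let the unpaired middle term force the unique minimizer $s_{m}$, i.e.\ $A_{m}$. Your write-up is also tidier than the paper's: you rescale consistently with $s_{k}=t_{k}D$, you give the pairing step as an explicit lower bound with its equality case, and you make explicit that Lemma \ref{minongamma} applies to every minimizer, not just one.
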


\begin{proof}
As mentioned above, 
without loss of generality, 
we may assume that there exists $t_{k} \in [0, 1]$ such that 
$A_{k}=A_{1} \#_{t_{k}} A_{n}$ for all $k=1, \cdots, n$ 
and $t_{k} \leq t_{k+1}$ for all $k=1, \cdots, n-1$.  
Note that $t_{1}=0$ and $t_{n}=1$. 
Let $D=\delta(A_{1}, A_{n})$. 
Since the case of $D=0$ is trivial ($A_{1}=\cdots=A_{n}=M$), 
it suffices to prove the assertion for the case of $D>0$. 
Let $f$ be the function defined by 
\[
f : [0, D] \to [0, \infty), \ f(t)=\frac{1}{n}\sum_{k=1}^{n} |t-t_{k}|. 
\]
By Lemma $\ref{geolem}$, 
$t \in [0, D]$ minimizes $f$ if and only if $A_{1} \#_{t/D} A_{n}$ minimizes $F_{1}(\cdot ; \mathbb{A})$. 
We rearrange the terms of $f$ as follows: 
\[
f(t)=\frac{1}{2m-1}\sum_{k=1}^{m-1} (|t-t_{k}|+|t-t_{2m-k}|) + |t-t_{m}|. 
\]
Here, we may assume that $m \geq 2$ since $t_{1}$ is a unique minimizer of $f$ when $m=1$. 
We have to minimize all $|t-t_{k}|+|t-t_{2m-k}|$ and $|t-t_{m}|$ to minimize $f$. 
Observe that every point in the closed interval $[t_{k}, t_{2m-k}]$ minimizes $|t-t_{k}|+|t-t_{2m-k}|$ for all $k=1, \cdots, m-1$ and 
$[t_{m-1}, t_{m+1}] \subset \cdots \subset [t_{1}, t_{n}]$ holds.  
Hence, every point in the closed interval $[t_{m-1}, t_{m+1}]$ minimizes $\sum_{k=1}^{m-1} (|t-t_{k}|+|t-t_{2m-k}|)$. 
Since $t_{m}$ is a unique minimizer of $|t-t_{m}|$ and $t_{m} \in [t_{m-1}, t_{m+1}]$, 
$t_{m}$ is a unique minimizer of $f$. 
Therefore, by Lemma $\ref{minongamma}$, 
$A_{1} \#_{t_{m}/D} A_{n} \in [A_{1}, A_{n}]$ is a unique minimizer of $F_{1}(\cdot ; \mathbb{A})$. 
\end{proof}

\begin{lem}\label{even}
If $A_{1}, \cdots, A_{n}$ lie on a common geodesic and $n=2m$ for some positive integer $m$, 
then every point on the geodesic $[A_{m}, A_{m+1}]$ minimizes $F_{1}( \cdot ; \mathbb{A})$. 
\end{lem}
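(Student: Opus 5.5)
We follow the scheme of the proof of Lemma \ref{odd}. As there, assume $A_{k}=A_{1}\#_{t_{k}}A_{n}$ with $0=t_{1}\le\cdots\le t_{n}=1$ and $D=\delta(A_{1},A_{n})>0$; the case $D=0$ is trivial since then all $A_{k}$ coincide. By Lemma \ref{geolem}, $\gamma(t)=A_{1}\#_{t/D}A_{n}$ is a unit-speed geodesic. So for $s\in[0,D]$ we have $\delta(\gamma(s),A_{k})=|s-s_{k}|$ with $s_{k}:=Dt_{k}$; this is the normalization the paper intends in the function $f$. Hence
\[
F_{1}(\gamma(s);\mathbb{A})=f(s):=\frac{1}{2m}\sum_{k=1}^{2m}|s-s_{k}|, \qquad s\in[0,D].
\]
By Lemma \ref{minongamma}, $F_{1}(\cdot;\mathbb{A})$ attains its minimum at some point of $\gamma([0,D])$. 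Therefore $\min_{\mathbb{P}_{d}}F_{1}(\cdot;\mathbb{A})=\min_{[0,D]}f$, and a point $\gamma(s)$ minimizes $F_{1}$ exactly when $s$ minimizes $f$.

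Next we minimize $f$ by pairing terms:
\[
f(s)=\frac{1}{2m}\sum_{k=1}^{m}\bigl(|s-s_{k}|+|s-s_{2m+1-k}|\bigr).
\]
By the triangle inequality, each bracket is at least $s_{2m+1-k}-s_{k}$, with equality precisely for $s\in[s_{k},s_{2m+1-k}]$. These intervals are nested:
\[
[s_{m},s_{m+1}]\subset[s_{m-1},s_{m+2}]\subset\cdots\subset[s_{1},s_{2m}].
\]
So every $s\in[s_{m},s_{m+1}]$ attains all the lower bounds at once, and therefore minimizes $f$ with value $\frac{1}{2m}\sum_{k=1}^{m}(s_{2m+1-k}-s_{k})$.

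Finally, we identify the set $\gamma([s_{m},s_{m+1}])$ with the geodesic $[A_{m},A_{m+1}]$. It is a subarc of the geodesic $\gamma$ with endpoints $\gamma(s_{m})=A_{m}$ and $\gamma(s_{m+1})=A_{m+1}$. Since $\mathbb{P}_{d}$ is uniquely geodesic (CAT$(0)$), this subarc is the geodesic $[A_{m},A_{m+1}]$; if $A_{m}=A_{m+1}$ it is the single point $A_{m}$. Hence every point of $[A_{m},A_{m+1}]$ minimizes $F_{1}(\cdot;\mathbb{A})$.

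The only step needing care is the transfer from $f$ to $F_{1}$. The pairing argument yields minimizers of $f$, which are minimizers of $F_{1}$ restricted to the curve $\gamma$. To conclude they are global minimizers on $\mathbb{P}_{d}$, we use Lemma \ref{minongamma}: some global minimizer lies on $\gamma$, so the two minimum values coincide. We also need the parametrization to be unit speed, so that $\delta(\gamma(s),A_{k})=|s-s_{k}|$ holds exactly; this is what Lemma \ref{geolem} provides.
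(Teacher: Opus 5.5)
Your proof is correct and takes essentially the same route as the paper. Both reduce to the one-dimensional function $f$ along the geodesic via Lemma~\ref{geolem} and Lemma~\ref{minongamma}, then pair the terms $|s-s_{k}|+|s-s_{2m+1-k}|$ and use the nested intervals to conclude that every $s\in[s_{m},s_{m+1}]$ minimizes $f$. Your version is slightly more careful than the paper's, since it corrects the normalization to $s_{k}=Dt_{k}$ and states explicitly why minimizers of $f$ are global minimizers of $F_{1}(\cdot;\mathbb{A})$.
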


\begin{proof}
Without loss of generality, 
we may assume that $i=1$, $j=n$, $t_{k} \leq t_{k+1}$ for all $k=1, \cdots, n-1$, $t_{1}=0$ and $t_{n}=1$. 
In the present case, 
the rearrangement of the terms of $f$ we carried out in Lemma $\ref{odd}$ now becomes
\[
f(t)=\frac{1}{2m}\sum_{k=1}^{m} (|t-t_{k}|+|t-t_{2m+1-k}|). 
\]
Therefore, every point in the closed interval $[t_{m}, t_{m+1}]$ minimizes $f$, 
which means that every positive-definite matrices on $[A_{m}, A_{m+1}]$ minimizes $F_{1}( \cdot ; \mathbb{A})$. 
\end{proof}

By the lemmas so far, 
$F_{1}( \cdot ; \mathbb{A})$ indeed has a minimizer, which belongs to $[A_{m}, A_{m+1}]$ 
if $A_{1}, \cdots, A_{n}$ lie on a common geodesic and $n=2m$ for some positive integer $m$. 


\begin{lem}\label{convlocuni}
$\{F_{p}( \cdot ; \mathbb{A})\}_{p \in (1, 2]}$ converges uniformly to $F_{1}( \cdot ; \mathbb{A})$ on every compact subset as $p \searrow 1$. 
\end{lem}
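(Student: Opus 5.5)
Fix a compact set $K \subset \mathbb{P}_{d}$. The plan is to reduce the statement to a uniform estimate for the scalar functions $s \mapsto s^{p}$ on a bounded interval. Each $\delta(\cdot, A_{k})$ is continuous, so it is bounded on $K$. Hence there is $R<\infty$ with $\delta(X, A_{k}) \le R$ for all $X \in K$ and all $k=1, \cdots, n$. By the triangle inequality, for every $X \in K$ we get
\[
|F_{p}(X ; \mathbb{A})-F_{1}(X ; \mathbb{A})| \le \frac{1}{n}\sum_{k=1}^{n}|\delta(X, A_{k})^{p}-\delta(X, A_{k})| \le \sup_{s \in [0, R]} |s^{p}-s|.
\]
It therefore suffices to show that $g(p):=\sup_{s\in[0,R]}|s^{p}-s| \to 0$ as $p \searrow 1$, with $p \in (1, 2]$.

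For the scalar estimate, apply the mean value theorem in the exponent. For $s>0$,
\[
s^{p}-s=(p-1)\, s^{\xi}\log s
\]
for some $\xi \in (1, p)$. We split into two cases.
\begin{enumerate}
\item If $s \in (0, 1]$, then $s^{\xi}\le s$ and $|s\log s| \le 1/e$, so $|s^{p}-s| \le (p-1)/e$.
\item If $s \in [1, R]$, then $s^{\xi} \le R^{2}$ and $\log s \le \log R$, so $|s^{p}-s| \le (p-1)R^{2}\log R$.
\end{enumerate}
At $s=0$ both sides vanish. Together these give $g(p) \le (p-1)\max\{1/e,\ R^{2}\log^{+} R\} \to 0$, which proves uniform convergence on $K$.

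The only point needing care is the behaviour of $s^{\xi}\log s$ near $s=0$, which the bound $|s\log s|\le 1/e$ handles. As an alternative to this computation, one can use Dini's theorem: $s^{p}\to s$ pointwise and monotonically on $[0,1]$ and on $[1,R]$ separately, and the functions are continuous on these compact intervals. Either route works, so I expect no real obstacle; the proof is routine once the uniform bound $R$ is extracted from compactness.
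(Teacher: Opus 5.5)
Your proof is correct and follows the paper's route: use compactness to get a uniform bound $R$ on $\delta(\cdot, A_{k})$ over $K$, then reduce to $\sup_{s\in[0,R]}|s^{p}-s|\to 0$. The only difference is the final scalar step. The paper invokes uniform continuity of $(y,p)\mapsto y^{p}$ on the compact rectangle $[0,R]\times[1,2]$, while your mean value estimate reaches the same conclusion with an explicit rate of order $p-1$.
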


\begin{proof}
Let $K \subset \mathbb{P}_{d}$ be a compact subset. 
Since $\delta(\cdot, A_{k})$ is continuous on $K$ for all $k=1, \cdots, n$,  
it has a maximum $R_{k}$ on $K$.  
Putting $R=\max_{i \in \{1, \cdots, n\}}R_{i}$, we obtain $\delta(\cdot, A_{k}) \leq R$ for any $k=1, \cdots n$. 

We define a function $\Phi : [0, R] \times [1, 2] \to \mathbb{R}$ by $\Phi(y, p):=y^{p}$. 
Since $[0, R] \times [1, 2]$ is compact and $\Phi$ is continuous on it, 
$\Phi$ is uniformly continuous. 
Hence, it follows that $\sup_{y \in [0, R]} |\Phi(y, p)-\Phi(y, 1)| \to 0$ as $p \searrow 1$. 
\begin{align*}
|F_{p}(X ; \mathbb{A})-F_{1}(X ; \mathbb{A})| 
\leq \frac{1}{n} \sum_{k=1}^{n} |\delta(X, A_{k})^{p}-\delta(X, A_{k})| 
\leq \sup_{y \in [0, R]} |\Phi(y, p)-\Phi(y, 1)|
\end{align*}
holds for all $X \in K$. 
Note that the right-hand side $\sup_{y \in [0, R]} |\Phi(y, p)-\Phi(y, 1)|$ does not depend on $X$. 
Therefore, $\{F_{p}( \cdot ; \mathbb{A})\}_{p \in (1, 2]}$ converges uniformly to $F_{1}( \cdot ; \mathbb{A})$ on $K$. 
\end{proof}

\begin{lem}\label{mplimexists}
$\lim_{p \searrow 1}M_{p}(\mathbb{A})$ exists and minimizes $F_{1}( \cdot ; \mathbb{A})$. 
\end{lem}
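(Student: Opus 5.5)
The plan is a compactness argument followed by identification of the limit. Write $M_{p}=M_{p}(\mathbb{A})$ for $p\in(1,2]$.

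\emph{Boundedness.} Since $M_{p}$ minimizes $F_{p}(\cdot;\mathbb{A})$,
\[
\frac{1}{n}\delta(M_{p},A_{1})^{p}\leq F_{p}(M_{p};\mathbb{A})\leq F_{p}(A_{1};\mathbb{A})\leq \max\{1,\max_{k}\delta(A_{1},A_{k})^{2}\}=:c.
\]
Hence $\delta(M_{p},A_{1})\leq \max\{1,nc\}$ for all $p\in(1,2]$. So $\{M_{p}\}$ lies in a closed ball $K$ of $(\mathbb{P}_{d},\delta)$. This ball is compact by Hopf--Rinow, since $\delta$ is complete.

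\emph{Limit points minimize $F_{1}$.} Let $p_{j}\searrow 1$ with $M_{p_{j}}\to M$. For any $X\in K$, Lemma \ref{convlocuni} gives
\[
F_{1}(M)=\lim_{j} F_{p_{j}}(M_{p_{j}})\leq \lim_{j}F_{p_{j}}(X)=F_{1}(X).
\]
Here the first equality uses uniform convergence on $K$ together with continuity of $F_{1}$. For $X\notin K$ one can enlarge $K$ to a ball containing both $K$ and $X$. Thus every accumulation point of $M_{p}$ as $p\searrow 1$ minimizes $F_{1}(\cdot;\mathbb{A})$. If the minimizer is unique, which is the case by Lemma \ref{YangLem} or Lemma \ref{odd}, then by compactness the whole family converges to it and we are done.

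\emph{The even collinear case.} It remains to treat $n=2m$ with $A_{k}=A_{1}\#_{t_{k}/D}A_{n}$, where I now write $t_{k}\in[0,D]$ as arclength parameters. First I show $M_{p}\in\gamma([0,D])$. The proof of Lemma \ref{minongamma} gives $\delta(\mathcal{P}(X),A_{k})<\delta(X,A_{k})$ for every $k$ whenever $X\notin\gamma([0,D])$, and this strict inequality persists after raising to the power $p$. By Lemma \ref{geolem}, $M_{p}=A_{1}\#_{s_{p}/D}A_{n}$, where $s_{p}$ is the unique minimizer of $g_{p}(t)=\sum_{k}|t-t_{k}|^{p}$ on $[0,D]$. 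So it suffices to show that $s_{p}$ converges, a one-dimensional version of the Landers--Rogge natural median. If $t_{m}=t_{m+1}$, the minimizer of $F_{1}$ is again unique and we are done. Otherwise the previous step shows that all limit points of $s_{p}$ lie in $[t_{m},t_{m+1}]$. The first-order condition for $s_{p}$ is
\[
\sum_{t_{k}<s_{p}}(s_{p}-t_{k})^{p-1}=\sum_{t_{k}>s_{p}}(t_{k}-s_{p})^{p-1}.
\]
I expect this analysis to be the main obstacle. I would handle it as follows.
\begin{enumerate}
\item Show that $s_{p}$ eventually lies in the open interval $(t_{m},t_{m+1})$, so that exactly $m$ terms appear on each side. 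If $s_{p}=t_{m}$, say, the left side has fewer than $m$ terms, each of size $O(p-1)$ away from $1$, while the right side has at least $m$ terms. Together with the location of limit points, this rules out the endpoints for $p$ near $1$.
\item Expand $x^{p-1}=1+(p-1)\log x+o(p-1)$ uniformly on compact subsets of $(0,\infty)$. Subtract the $m$ ones on each side and divide by $p-1$. The limit equation becomes
\[
\Psi(s):=\sum_{k\leq m}\log(s-t_{k})-\sum_{k>m}\log(t_{k}-s)=0.
\]
\item Note that $\Psi$ is strictly increasing on $(t_{m},t_{m+1})$ and tends to $\mp\infty$ at the endpoints. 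Hence it has a unique zero $s_{*}$.
\item Conclude that $s_{p}\to s_{*}$. Since $\Psi\to-\infty$ near $t_{m}$, the rescaled equation is violated near the endpoints, which keeps $s_{p}$ away from them. Uniform convergence of the rescaled equation on compact subintervals, plus monotonicity, then forces convergence to $s_{*}$.
\end{enumerate}
Consequently $\lim_{p\searrow1}M_{p}=A_{1}\#_{s_{*}/D}A_{n}$, which minimizes $F_{1}$ by Lemma \ref{even}.
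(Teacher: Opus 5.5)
Your proof is correct. Its skeleton matches the paper's. You get an a priori bound on $M_p$, use compactness, and show via Lemma~\ref{convlocuni} that every subsequential limit minimizes $F_1$, which gives convergence whenever the minimizer is unique. In the even collinear case you reduce to the one-dimensional problem for $g_p$ by the projection argument of Lemma~\ref{minongamma}. Your bound through $F_p(A_1;\mathbb{A})$ together with Hopf--Rinow is a tidier substitute for the paper's bound around $I$.

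The genuine difference is the final step. The paper cites Landers and Rogge for the convergence of the one-dimensional minimizers $m_p$, and only records that the limit is ``some $m\in[t_l,t_{l+1}]$''. You instead prove the convergence from the first-order condition and the expansion $x^{p-1}=1+(p-1)\log x+o(p-1)$. In doing so you identify the limit explicitly as the unique $s_*\in(t_m,t_{m+1})$ with $\prod_{k\le m}(s_*-t_k)=\prod_{k>m}(t_k-s_*)$. Your route is self-contained and yields a concrete formula $M(\mathbb{A})=A_1\#_{s_*/D}A_n$, which substantiates the paper's remark that $M(\mathbb{A})$ is explicitly constructed in the collinear case. The paper's route is shorter and transfers verbatim to the weighted median at the end of Section~4. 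Your argument also adapts to weights with $\sum_{k\le m}w_k=\sum_{k>m}w_k$, but that would need to be checked.

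One step should be tightened: in Step~4, the claim that $\Psi\to-\infty$ near $t_m$ keeps $s_p$ away from the endpoints does not follow as written. The convergence $\Psi_p\to\Psi$ is only locally uniform on the open interval, and for fixed $p$ the function $\Psi_p$ stays bounded as $s\to t_m^+$. The claim is also unnecessary. On $(t_m,t_{m+1})$ one has $g_p'(s)=p(p-1)\Psi_p(s)$, and $g_p'$ is strictly increasing on $[0,D]$ with unique zero $s_p$. So for fixed $\varepsilon>0$, the pointwise limits $\Psi_p(s_*\pm\varepsilon)\to\Psi(s_*\pm\varepsilon)$, which have opposite signs, already force $s_p\in(s_*-\varepsilon,s_*+\varepsilon)$ for $p$ near $1$. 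This makes your Step~1 and the appeal to the location of limit points superfluous.
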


\begin{proof}
We firstly discuss the case when $F_{1}(\cdot ; \mathbb{A})$ has a unique minimizer. 
Let $\{p_{j}\}_{j=1}^{\infty}$ be a real sequence with $p_{j} \in (1, 2]$ for all $j=1, 2, \cdots$ and $p_{j} \searrow 1$ as $j \to \infty$. 
We show that the sequence $\{M_{p_{j}}(\mathbb{A})\}_{j=1}^{\infty}$ is bounded in $(\mathbb{P}_{d}, \delta)$.  
Since $p_{j} \in (1, 2]$ for all $j=1, 2, \cdots$, 
\begin{align*}
F_{p_{j}}(M_{p_{j}}(\mathbb{A}) ; \mathbb{A}) 
\leq F_{p_{j}}(I ; \mathbb{A}) 
=\frac{1}{n}\sum_{k=1}^{n}\delta(I, A_{k})^{p_{j}} 
\leq \frac{1}{n}\sum_{k=1}^{n}(1+\delta(I, A_{k})^{2})
=:K
\end{align*}
holds for all $j=1, 2, \cdots$. 
Note that $K$ does not depend on $j$. 
We put $R:=1+\max_{1 \leq k \leq n} \delta(I, A_{k})$ and $S:=(1/n)\sum_{k=1}^{n} \delta(I, A_{k})$. 
If $j$ satisfies $\delta(M_{p_{j}}(\mathbb{A}), I)>R$, then, by the triangle inequality, 
\begin{align*}
\delta(M_{p_{j}}(\mathbb{A}), A_{k}) 
\leq \delta(M_{p_{j}}(\mathbb{A}), I)-\delta(I, A_{k}) 
&>R-\delta(I, A_{k}) \\
&=1+(\max_{1 \leq k \leq n} \delta(I, A_{k})-\delta(I, A_{k})) \\
&\geq 1
\end{align*}
holds for each $k=1, \cdots, n$. 
Hence, we obtain $\delta(M_{p_{j}}(\mathbb{A}), A_{k})^{p_{j}} \geq \delta(M_{p_{j}}(\mathbb{A}), A_{k})$ for all $j=1, 2, \cdots$, 
which yields 
\begin{align*}
K \geq F_{p_{j}}(M_{p_{j}}(\mathbb{A}) ; \mathbb{A}) 
=\frac{1}{n}\sum_{k=1}^{n}\delta(M_{p_{j}}(\mathbb{A}), A_{k})^{p_{j}} 
&\geq \frac{1}{n}\sum_{k=1}^{n}\delta(M_{p_{j}}(\mathbb{A}), A_{k}) \\
&\geq \frac{1}{n}\sum_{k=1}^{n}(\delta(M_{p_{j}}(\mathbb{A}), I)-\delta(I, A_{k})) \\
&=\delta(M_{p_{j}}(\mathbb{A}), I)-S.
\end{align*}
Thus, it follows that $\delta(M_{p_{j}}(\mathbb{A}), I) \leq K+S$, 
which implies that 
\[
\delta(M_{p_{j}}(\mathbb{A}), I) \leq C:=\max\{R, K+S\}
\]
holds for any $j=1, 2, \cdots$ whether $j$ satisfies $\delta(M_{p_{j}}(\mathbb{A}), I)>R$ or not. 
It is immediate that the closed ball $B(I, C):=\{X \in \mathbb{P}_{d} : \delta(X, I) \leq C\}$ is closed in a compact set 
$\{H \in \mathbb{H}_{d} : e^{-C}I \leq H \leq e^{C}I\}$, where $\mathbb{H}_{d}$ is all the $d \times d$ self-adjoint matrices, 
and hence, $B(I, C)$ is compact. 
The compactness of $B(I, C)$ guarantees that $\{M_{p_{j}}(\mathbb{A})\}_{j=1}^{\infty}$ has at least one convergent subsequence. 
We pick an arbitrary convergent subsequence $\{M_{p_{j_{l}}}(\mathbb{A})\}_{l=1}^{\infty}$ and 
suppose that $M_{p_{j_{l}}}(\mathbb{A}) \to M^{\prime}$ as $l \to \infty$. 
By definition, 
\begin{equation}\label{subpjl}
F_{{p_{j}}_{l}}(M_{{p_{j}}_{l}}(\mathbb{A}) ; \mathbb{A}) \leq F_{{p_{j}}_{l}}(X ; \mathbb{A})
\end{equation}
holds for any $X \in \mathbb{P}_{d}$ and $l=1, 2, \cdots$, 
$F_{{p_{j}}_{l}}(X ; \mathbb{A}) \to F_{1}(X ; \mathbb{A})$ as $l \to \infty$ by Lemma $\ref{convlocuni}$. 
By Lemma $\ref{convlocuni}$ again, 
$\{F_{p}( \cdot ; \mathbb{A})\}_{p \in (1, 2]}$ converges uniformly to $F_{1}( \cdot ; \mathbb{A})$ 
on a compact subset $\{M^{\prime}\} \cup \{M_{{p_{j}}_{l}}(\mathbb{A}) : l=1, 2, \cdots \}$. 
Hence, we obtain 
\begin{align*}
|F_{{p_{j}}_{l}}(M_{{p_{j}}_{l}}(\mathbb{A}) ; \mathbb{A})&-F_{1}(M^{\prime} ; \mathbb{A})| \\
&\leq 
|F_{{p_{j}}_{l}}(M_{{p_{j}}_{l}}(\mathbb{A}) ; \mathbb{A})-F_{1}(M_{{p_{j}}_{l}}(\mathbb{A}) ; \mathbb{A})|
+|F_{1}(M_{{p_{j}}_{l}}(\mathbb{A}) ; \mathbb{A})-F_{1}(M^{\prime} ; \mathbb{A})| 
\to 0
\end{align*}
as $l \to \infty$. 
Letting $l \to \infty$ in $\eqref{subpjl}$, we have $F_{1}(M^{\prime} ; \mathbb{A}) \leq F_{1}(X ; \mathbb{A})$ for any $X \in \mathbb{P}_{d}$, 
which means that $M^{\prime}=M_{1}(\mathbb{A})$ since we assume that $F_{1}( \cdot ; \mathbb{A})$ has a unique minimizer in the present case.  
Since $\{M_{p_{j}}(\mathbb{A}) : j=1, 2, \cdots \}$ is relatively compact, 
it follows that $M_{p_{j}}(\mathbb{A}) \to M_{1}(\mathbb{A})$ as $j \to \infty$. 

It remains to deal with the case that $n=2l$ for some positive integer $l$ and $A_{1}, \cdots, A_{n}$ lie on a common geodesic. 
In the present case, we may assume that the common geodesic is $\gamma(t)=A_{1} \#_{t/D} A_{n}, t \in [0, D]$, $A_{1} \neq A_{n}$,
there exist $t_{k} \in [0, D]$ such that $0=t_{1} \leq t_{2} \cdots \leq t_{n}=D$ and $\gamma(t_{k})=A_{k}$, 
where $D=\delta(A_{1}, A_{n})$. 
It can be shown that the minimizer $M_{p}(\mathbb{A})$ of $F_{p}(\cdot ; \mathbb{A})$ with $p>1$ belongs to $\gamma([0, D])$ 
in the same manner in Lemma $\ref{minongamma}$. 
Thus, minimizing $F_{p}( \cdot ; \mathbb{A})$ with $p>1$ on $\gamma([0, D])$ is equivalent to 
minimizing $g_{p}(t):=(1/(2l))\sum_{k=1}^{2l}|t-t_{k}|^{p}$ with $p>1$ on $[0, D]$. 
Moreover, since $F_{p}( \cdot ; \mathbb{A})$ with $p>1$ has a unique minimizer, 
there exists a unique minimizer $m_{p} \in [0, D]$ of $g_{p}(t)$ with $p>1$ such that $M_{p}(\mathbb{A})=\gamma(m_{p})$. 
In \cite{Lan81}, 
Landers and Rogge showed that the net $\{m_{p}\}_{p \in (1, 2]}$ converges to some $m \in [t_{l}, t_{l+1}]$ as $p \searrow 1$,  
which implies that the net $\{M_{p}(\mathbb{A})\}_{p \in (1, 2]}$ converges to $\gamma(m) \in \gamma([t_{l}, t_{l+1}])$. 
Since $m$ minimizes $g_{1}$, 
$\gamma(m)$ also minimizes $F_{1}( \cdot ; \mathbb{A})$. 
\end{proof}

Here is the Riemannian median that we propose. 
The validity of the definition is guaranteed by Lemma $\ref{mplimexists}$. 

\begin{dfn}\label{Riemed}
Let $\mathbb{A}=(A_{1}, \cdots, A_{n}) \in (\mathbb{P}_{d})^{n}$. 
Define the Riemannian median $M(\mathbb{A})$ of $\mathbb{A}$ by 
\[
M(\mathbb{A})=\lim_{p \searrow 1} M_{p}(\mathbb{A}), 
\]
where $M_{p}(\mathbb{A})$ denotes a unique minimizer of a map 
$F_{p}(X ; \mathbb{A})=(1/n)\sum_{k=1}^{n}\delta(X, A_{k})^{p}$ with $p>1$. 
\end{dfn}

We emphasize that $M(\mathbb{A})$ is explicitly constructed in terms of weighted geometric means, 
if $A_{1}, \cdots, A_{n}$ lie on a common geodesic. 


\section{SOME PROPERTIES OF THE RIEMANNIAN MEDIAN}

\subsection{The distance between the Riemannian median and the Karcher mean} 

It is well known that the mean $\mu$, the median $m$ and the standard deviation $\sigma$ of real-valued data satisfy the following inequality:
\begin{equation}\label{mms}
|\mu-m| \leq \sigma.
\end{equation}
We show an analogue of the above $\eqref{mms}$ in $(\mathbb{P}_{d}, \delta)$. 

\begin{thm}
Let $\mathbb{A}=(A_{1}, \cdots, A_{n}) \in (\mathbb{P}_{d})^{n}$. 
Then,  
\begin{equation}\label{main}
\delta(M, \Lambda) \leq \frac{1}{n}\sum_{k=1}^{n}\delta(A_{k}, \Lambda) \leq \sqrt{\frac{1}{n} \sum_{k=1}^{n} \delta(A_{k}, \Lambda)^{2}}, 
\end{equation}
holds, where $M=M(\mathbb{A})$ 
and $\Lambda$ is the Karcher mean of $\mathbb{A}$ with weight $[1/n, \cdots, 1/n]^{T}$. 
\end{thm}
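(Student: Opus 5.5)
The plan is to prove the two inequalities in \eqref{main} separately. The right-hand one is elementary: it is the Cauchy--Schwarz inequality (equivalently, the convexity of $t\mapsto t^{2}$, i.e. the power-mean inequality) applied to the nonnegative numbers $\delta(A_{1},\Lambda),\dots,\delta(A_{n},\Lambda)$, giving $\bigl(\frac1n\sum_k\delta(A_k,\Lambda)\bigr)^2\le \frac1n\sum_k\delta(A_k,\Lambda)^2$.

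For the left-hand inequality I would use Jensen's inequality, Proposition~\ref{Jensen}, on the global NPC space $(\mathbb{P}_{d},\delta)$. Take the empirical measure $p=\frac1n\sum_{k=1}^n\delta_{A_k}$. It has finite (hence separable) support and finite moments of every order, so $p\in\mathcal{P}^{2}(\mathbb{P}_d)\subset\mathcal{P}^{1}(\mathbb{P}_d)$. Its barycenter $b(p)$ is the unique minimizer of $X\mapsto\frac1n\sum_k\delta(X,A_k)^2=F_2(X;\mathbb{A})$, which is exactly the Karcher mean, so $b(p)=\Lambda$. Now choose the test function $f(X)=\delta(M,X)$ with $M=M(\mathbb{A})$ fixed. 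This $f$ is real-valued and continuous, hence lower semicontinuous. It is also convex: in a CAT$(0)$ space the distance function from a fixed point is convex along geodesics. This follows from the joint convexity of $\delta$ along geodesics in a Busemann/CAT$(0)$ space, applied with one geodesic constant, and the paper has already used ``the convexity of $\delta$''. Jensen then gives
\[
\delta(M,\Lambda)=f(b(p))\le\int f\,dp=\frac1n\sum_{k=1}^n\delta(M,A_k)=F_1(M;\mathbb{A}).
\]

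It remains to pass from $F_1(M;\mathbb{A})$ to $\frac1n\sum_k\delta(A_k,\Lambda)=F_1(\Lambda;\mathbb{A})$. By Lemma~\ref{mplimexists}, $M=M(\mathbb{A})$ minimizes $F_1(\cdot;\mathbb{A})$, so $F_1(M;\mathbb{A})\le F_1(\Lambda;\mathbb{A})$. Chaining the two inequalities gives the left-hand inequality of \eqref{main}.

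The only step needing care is checking the hypotheses of Proposition~\ref{Jensen}: identifying the barycenter of the empirical measure with the Karcher mean, and confirming convexity and lower semicontinuity of $\delta(M,\cdot)$. None of this looks like a genuine obstacle. Note that the argument never uses how $M$ is chosen among the minimizers of $F_1$. So the inequality holds for every Riemannian median in the set sense, and in particular for the limit defined in Definition~\ref{Riemed}.
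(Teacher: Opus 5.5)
Your proposal is correct and follows the paper's proof essentially step for step. The paper applies Jensen's inequality (Proposition~\ref{Jensen}) to the continuous convex function $\delta(M,\cdot)$ and the empirical measure, whose barycenter is $\Lambda$, then uses $F_{1}(M;\mathbb{A})\le F_{1}(\Lambda;\mathbb{A})$ from the minimality of $M$, then Cauchy--Schwarz; your closing observation that only the minimizing property of $M$ is used is exactly the paper's subsequent remark.
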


\begin{proof}
Since the function $\mathbb{P}_{d} \ni X \mapsto \delta(M, X) \in [0, \infty)$ is continuous and convex, 
\begin{equation}\label{eq1}
\delta(M, \Lambda) \leq \frac{1}{n}\sum_{k=1}^{n}\delta(M, A_{k})
\end{equation}
follows by Proposition $\ref{Jensen}$. 
Since $M$ minimizes $F_{1}( \cdot ; \mathbb{A})$, 
we have 
\begin{equation}\label{eq2}
\frac{1}{n}\sum_{k=1}^{n}\delta(M, A_{k})
=F_{1}(M ; \mathbb{A}) 
\leq F_{1}(\Lambda ; \mathbb{A})
=\frac{1}{n}\sum_{k=1}^{n}\delta(\Lambda, A_{k}).
\end{equation}
By $\eqref{eq1}$, $\eqref{eq2}$ and the Cauchy-Schwarz inequality,  
we obtain 
\begin{align*}
\delta(M, \Lambda) 
\leq \frac{1}{n}\sum_{k=1}^{n}\delta(M, A_{k}) 
\leq \frac{1}{n}\sum_{k=1}^{n}\delta(\Lambda, A_{k}) 
\leq \sqrt{\frac{1}{n}\sum_{k=1}^{n} \delta(A_{k}, \Lambda)^{2}}. 
\end{align*}
Hence, we are done.
\end{proof}

\begin{rem}
Inequality $\eqref{main}$ holds even if $M$ is replaced 
by another minimizer of $F_{1}(\cdot ; \mathbb{A})$. 
\end{rem}


\subsection{Commutative case}

Let $\mathbb{A}=(A_{1}, \cdots, A_{n}) \in (\mathbb{P}_{d})^{n}$ and 
let $\mathbb{D}_{d}$ denote all the $d \times d$ diagonal matrix whose all entries are positive. 
Unless otherwise stated, $A_{1}, \cdots, A_{n}$ mutually commute in this subsection. 
We investigate the Riemannian median $M(\mathbb{A})$ when $A_{1}, \cdots, A_{n}$ mutually commute. 
Totally geodesic submanifolds play an important role in the case. 

\begin{dfn}[\cite{Bri99}]
A differential submanifold $L \subset \mathbb{P}_{d}$ is said to be totally geodesic 
if any geodesic line in $\mathbb{P}_{d}$ that intersects $L$ in two points is entirely contained in $L$. 
\end{dfn}

It is well known that $A_{1}, \cdots, A_{n}$ mutually commute if and only if there exists a unitary matrix $U$ that diagonalizes $A_{1}, \cdots, A_{n}$ simultaneously. 
We fix such a unitary $U$ and set 
\[
C_{U}=\{UDU^{\ast} : D \in \mathbb{D}_{d}\}. 
\]

\begin{lem}\label{totallygeo}
$C_{U}$ is totally geodesic in $\mathbb{P}_{d}$. 
\end{lem}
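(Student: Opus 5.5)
We will show directly that every geodesic of $\mathbb{P}_{d}$ joining two points of $C_{U}$ stays inside $C_{U}$. By uniqueness of geodesics in the CAT$(0)$ space $\mathbb{P}_{d}$, this is the same as saying that geodesic lines through two points of $C_{U}$ lie in $C_{U}$. The main tool is Lemma \ref{geolem}, which says that the geodesic from $A$ to $B$ is $t\mapsto A\#_{t/D}B$. We also use its natural extension to all $t\in\mathbb{R}$, namely
\[
\gamma(t)=A^{1/2}(A^{-1/2}BA^{-1/2})^{t/D}A^{1/2},
\]
which is the full geodesic line through $A$ and $B$.

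First take $A=UD_{1}U^{\ast}$ and $B=UD_{2}U^{\ast}$ with $D_{1},D_{2}\in\mathbb{D}_{d}$. Since $U$ is unitary, $A^{1/2}=UD_{1}^{1/2}U^{\ast}$ and $A^{-1/2}=UD_{1}^{-1/2}U^{\ast}$. Hence $A^{-1/2}BA^{-1/2}=UD_{1}^{-1}D_{2}U^{\ast}$, using that diagonal matrices commute. Real powers of this positive matrix are computed through the same unitary: $(UD_{1}^{-1}D_{2}U^{\ast})^{s}=U(D_{1}^{-1}D_{2})^{s}U^{\ast}$ for every $s\in\mathbb{R}$. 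Therefore
\[
\gamma(t)=UD_{1}^{1-t/D}D_{2}^{t/D}U^{\ast},
\]
and this lies in $C_{U}$ because $D_{1}^{1-t/D}D_{2}^{t/D}$ is diagonal with positive entries.

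Second, we justify that a geodesic line containing $A$ and $B$ must be this curve. In $\mathbb{P}_{d}$ the geodesic through $A$ is $t\mapsto A^{1/2}\exp(tH)A^{1/2}$ with $H$ Hermitian, and $\mathbb{P}_{d}$ is uniquely geodesic (Bhatia, Ch.~6). So a line through $A$ and $B$ restricts on the segment between them to the unique geodesic $[A,B]$. A Riemannian geodesic is determined by a point and its velocity, so the whole line coincides with the extension of $\gamma$ above.

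Finally, $C_{U}$ is a differentiable submanifold: it is the image of the embedding $\mathbb{R}^{d}\ni x\mapsto U\operatorname{diag}(e^{x})U^{\ast}$.

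The step needing the most care is the second one, identifying an arbitrary geodesic line meeting $C_{U}$ twice with the extended weighted-geometric-mean curve. I would handle it by citing uniqueness of geodesics together with the explicit exponential-map description of geodesics in $\mathbb{P}_{d}$, and not by arguing from the metric definition alone.
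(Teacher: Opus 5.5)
Your proof is correct and is essentially the paper's argument: the geodesic line through $A,B\in C_U$ is identified, via the exponential-map description of geodesics in $\mathbb{P}_d$, with the extended curve $A^{1/2}(A^{-1/2}BA^{-1/2})^{t}A^{1/2}=A^{1-t}B^{t}$, which stays inside the commuting family. The only differences are cosmetic: the paper first reduces to $\mathbb{D}_d$ via the congruence $\phi_U$, whereas you carry $U$ through the computation directly, and you additionally verify that $C_U$ is a submanifold, which the paper does not check.
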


\begin{proof}
Let $\phi_{U}$ denote a linear isomorphism defined by 
\[
\phi_{U} :  \mathbb{P}_{d} \to \mathbb{P}_{d}, \ \phi_{U}(A)=UAU^{\ast}.
\]
It is easily seen that an image of a geodesic by $\phi_{U}$ is also a geodesic since the $t$-weighted geometric mean is invariant under congruence, 
and hence, it suffices to show that $\mathbb{D}_{d}$ is totally geodesic in $\mathbb{P}_{d}$. 

Suppose that a geodesic line $\gamma : \mathbb{R} \to \mathbb{P}_{d}$ intersects $\mathbb{D}_{d}$ in two points $A, B \in \mathbb{D}_{d}$. 
Recall that $\mathbb{P}_{d}$ is a Hadamard manifold. 
Hence, the geodesic joining $A$ and $B$ is unique.  
The velocity vector of this geodesic line at $A$ is $A^{1/2}\log(A^{-1/2}BA^{-1/2})A^{1/2}$, and thus,  
\begin{align*}
\gamma(t)
=A^{1/2}\exp(t A^{-1/2} (A^{1/2}\log(A^{-1/2}BA^{-1/2})A^{1/2})A^{-1/2})A^{1/2} 
&=A^{1/2}(A^{-1/2}BA^{-1/2})^{t}A^{1/2} \\
&=A^{1-t}B^{t}
\end{align*}
holds for all $t \in \mathbb{R}$. 
Observe that $A^{1-t}B^{t} \in \mathbb{D}_{d}$ if $A, B \in \mathbb{D}_{d}$. 
\end{proof}

It is easy to see that $C_{U}$ is convex and closed in $(\mathbb{P}_{d}, \delta)$. 

\begin{prop}
Let $\mathbb{A}=(A_{1}, \cdots, A_{n}) \in (\mathbb{P}_{d})^{n}$. 
If $A_{1}, \cdots, A_{n} \in \mathbb{P}_{d}$ mutually commute, then $M(\mathbb{A})$ and $A_{1}, \cdots, A_{n}$ also mutually commute. 
\end{prop}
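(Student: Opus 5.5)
The plan is to show that each approximant $M_{p}(\mathbb{A})$, $p>1$, lies in the set $C_{U}$, and then pass to the limit $p \searrow 1$ using that $C_{U}$ is closed. Since every element of $C_{U}$ has the form $UDU^{\ast}$ with $D \in \mathbb{D}_{d}$, and each $A_{k}=UD_{k}U^{\ast}$ also lies in $C_{U}$, any two elements of $C_{U}$ commute. So the conclusion follows once we know $M(\mathbb{A}) \in C_{U}$.

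\textbf{Step 1.} Fix $p>1$ and let $\mathcal{P}$ denote the orthogonal projection onto $C_{U}$. This projection is well defined by Proposition $\ref{orthoproj}$, because $C_{U}$ is convex and closed in the complete space $(\mathbb{P}_{d}, \delta)$, hence complete. Suppose, for contradiction, that $X:=M_{p}(\mathbb{A}) \notin C_{U}$. We repeat the argument of Lemma $\ref{minongamma}$ with $C=\gamma([0,l])$ replaced by $C_{U}$. For each $k$ with $A_{k} \neq \mathcal{P}(X)$, Proposition $\ref{orthoproj}$(2) gives $\angle_{\mathcal{P}(X)}(X, A_{k}) \geq \pi/2$. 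Proposition $\ref{Alxcmp}$ then transfers this bound to the comparison angle. The law of cosines in the comparison triangle, using $\delta(X, \mathcal{P}(X))>0$, yields $\delta(X, A_{k})>\delta(\mathcal{P}(X), A_{k})$. If instead $A_{k}=\mathcal{P}(X)$, the same strict inequality holds trivially. Since $t \mapsto t^{p}$ is strictly increasing, $F_{p}(\mathcal{P}(X) ; \mathbb{A})<F_{p}(X ; \mathbb{A})$. This contradicts the minimality of $X$, so $M_{p}(\mathbb{A}) \in C_{U}$ for every $p>1$.

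\textbf{Step 2.} By Definition $\ref{Riemed}$ and Lemma $\ref{mplimexists}$, $M(\mathbb{A})=\lim_{p \searrow 1} M_{p}(\mathbb{A})$. Every term of this net lies in $C_{U}$, and $C_{U}$ is closed, so $M(\mathbb{A}) \in C_{U}$. Writing $M(\mathbb{A})=UDU^{\ast}$ and $A_{k}=UD_{k}U^{\ast}$ with $D, D_{k}$ diagonal, we get $M(\mathbb{A})A_{k}=UDD_{k}U^{\ast}=UD_{k}DU^{\ast}=A_{k}M(\mathbb{A})$, which is the claim.

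The main point to justify is that $C_{U}$ is closed and convex, which the paper states just before the proposition. Convexity follows from Lemma $\ref{totallygeo}$, since geodesics between points of $C_{U}$ have the form $U A^{1-t}B^{t} U^{\ast}$ with $A, B$ diagonal. Closedness is the one genuinely delicate point: $\mathbb{D}_{d}$ is closed in $\mathbb{P}_{d}$ because a limit of positive-definite diagonal matrices is diagonal and lies in $\mathbb{P}_{d}$ by assumption, and conjugation by $U$ preserves this. An alternative route that avoids projections would use the uniqueness of $M_{p}(\mathbb{A})$ together with the invariance of $F_{p}$ under the isometries $X \mapsto VXV^{\ast}$, for diagonal unitaries $V$ in the $U$-basis, which fix each $A_{k}$. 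This shows $M_{p}(\mathbb{A})$ commutes with all such $V$, hence is diagonal in the $U$-basis. I would mention this as a backup.
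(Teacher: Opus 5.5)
Your proof is correct and uses the same key idea as the paper's sketch: the orthogonal projection onto the closed convex set $C_{U}$ strictly decreases every $\delta(\cdot, A_{k})$, by the argument of Lemma~\ref{minongamma}. The paper applies this directly to $M(\mathbb{A})$ as a minimizer of $F_{1}(\cdot ; \mathbb{A})$, which shows that every $F_{1}$-minimizer lies in $C_{U}$, a fact it uses right afterwards; you instead apply it to each $M_{p}(\mathbb{A})$ and pass to the limit via closedness of $C_{U}$, so you need only the existence of the limit from Lemma~\ref{mplimexists}, not the fact that the limit minimizes $F_{1}$.
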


We omit the detailed proof since we can show this proposition by almost the same way in Lemma 3.4; however, we give a sketch of the proof : 
We can show $\delta(M, A_{k})>\delta(\mathcal{P}(M), A_{k})$ holds for all $k=1, \cdots, n$, 
where $\mathcal{P}(M)$ is the orthogonal projection onto $C_{U}$. 
This yields $F_{1}(\mathcal{P}(M) ; \mathbb{A})<F_{1}(M ; \mathbb{A})$, which is a contradiction since $M$ is a minimizer of $F_{1}(\cdot ; \mathbb{A})$. 

By this proposition, 
we only have to investigate all the matrices that commute with $A_{1}, \cdots, A_{n}$ to find minimizers of $F_{1}(\cdot ; \mathbb{A})$.  
This fact means that the definition of Riemannian median in $\mathbb{P}_{d}$ is an extension of the one of the median of scalars. 


\subsection{Effect of outliers}

In this subsection, we explain that the Riemannian median is less sensitive to outliers than the Karcher mean. 
By \cite[Proposition 16]{Bha062}, 
\begin{equation}\label{nablad2}
\nabla_{X} \delta(X, A_{k})^{2}=-2X^{1/2} \log(X^{-1/2}A_{k}X^{-1/2}) X^{1/2}, \ k=1, \cdots, n
\end{equation}
holds for any $X \in \mathbb{P}_{d}$. 
Hence, putting 
\begin{align*}
u_{X}(A_{k})
&:=\frac{X^{1/2} \log(X^{-1/2}A_{k}X^{-1/2}) X^{1/2}}{\|X^{1/2} \log(X^{-1/2}A_{k}X^{-1/2}) X^{1/2}\|_{X}} \\
&=\frac{X^{1/2} \log(X^{-1/2}A_{k}X^{-1/2}) X^{1/2}}{\delta(X, A_{k})}, \ k=1, \cdots, n, 
\end{align*}
where $\| \cdot \|_{X}$ denotes the norm on $T_{X}\mathbb{P}_{d}$, 
we have 
\[
\nabla F_{2}(X ; \mathbb{A})=-\frac{2}{n}\sum_{k=1}^{n} \delta(X, A_{k})u_{X}(A_{k}), \ 
\nabla F_{1}(X ; \mathbb{A})=-\frac{1}{n}\sum_{k=1}^{n} u_{X}(A_{k}). 
\]
These mean that the contribution of each $A_{k}$ to $\nabla F_{2}$ is proportional to $\delta(X, A_{k})$, 
while its contribution to $\nabla F_{1}$ has norm one, independently of the Riemannian distance between $X$ and $A_{k}$. 
In particular, a data point far away from the Karcher mean $\Lambda$ of $\mathbb{A}$ contributes to the equation 
\[
\sum_{k=1}^{n} \delta(\Lambda, A_{k})u_{\Lambda}(A_{k})=0
\]
proportional to its (Riemannian) distance from $\Lambda$, 
on the other hand, each data point contributes to the equation
\[
\sum_{k=1}^{n} u_{M(\mathbb{A})}(A_{k})=0
\]
only through a unit tangent vector. 
Therefore, it can be said that the Riemannian median behaves more robustly against outliers than the Karcher mean. 


\subsection{Congruence invariance, joint homogeneity and self-duality}

In this subsection, 
let $C\mathbb{A}C^{\ast}$ denote a tuple $(CA_{1}C^{\ast}, \cdots, CA_{n}C^{\ast}) \in (\mathbb{P}_{d})^{n}$. 

Recall that $\mathbb{P}_{d}$ is identified with a homogeneous space ${\mathrm{GL}}_{d}(\mathbb{C})/U_{d}$ through the transitive action 
\[
\mathbb{P}_{d} \to \mathbb{P}_{d}, \ A \mapsto CAC^{\ast}, \ C \in {\mathrm{GL}}_{d}(\mathbb{C}), 
\] 
where ${\mathrm{GL}}_{d}(\mathbb{C})$ is all the $d \times d$ invertible matrices and $U_{d}$ is all the $d \times d$ unitary matrices. 
Thus, the Riemannian median should have the congruence invariance. 
In fact, it is easily seen that the Riemannian median has the property 
by the congruence invariance of $\delta$ and the uniqueness of $M_{p}(\mathbb{A})$ with $p>1$. 

\begin{thm}\label{conginv}
Let $\mathbb{A}=(A_{1}, \cdots, A_{n}) \in (\mathbb{P}_{d})^{n}$. 
Then, 
\[
M(C\mathbb{A}C^{\ast})=CM(\mathbb{A})C^{\ast}
\]
holds for any $C \in {\mathrm{GL}}_{d}(\mathbb{C})_{d}$.  
\end{thm}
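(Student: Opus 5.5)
The plan is to pass the congruence through each approximating minimizer $M_{p}$ and then take the limit $p \searrow 1$, using the definition of $M(\mathbb{A})$ from Definition \ref{Riemed}.

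First, for fixed $C \in \mathrm{GL}_{d}(\mathbb{C})$, the map $\Gamma_{C}(X)=CXC^{\ast}$ is an isometry of $(\mathbb{P}_{d}, \delta)$. This is the standard congruence invariance of the trace-metric distance, since
\[
\delta(A,B)=\|\log(A^{-1/2}BA^{-1/2})\|_{2}
\]
and $(CAC^{\ast})^{-1}(CBC^{\ast})$ is similar to $A^{-1}B$, so the eigenvalues entering $\delta$ are unchanged. Moreover $\Gamma_{C}$ is a bijection of $\mathbb{P}_{d}$, with inverse $\Gamma_{C^{-1}}$.

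Next, fix $p>1$. For every $X \in \mathbb{P}_{d}$,
\[
F_{p}(CXC^{\ast} ; C\mathbb{A}C^{\ast})=\frac{1}{n}\sum_{k=1}^{n}\delta(CXC^{\ast}, CA_{k}C^{\ast})^{p}=F_{p}(X ; \mathbb{A}).
\]
Because $\Gamma_{C}$ is surjective, minimizing $F_{p}(\cdot ; C\mathbb{A}C^{\ast})$ over $\mathbb{P}_{d}$ is the same as minimizing $F_{p}(\Gamma_{C}(\cdot) ; C\mathbb{A}C^{\ast})=F_{p}(\cdot ; \mathbb{A})$. Hence $CM_{p}(\mathbb{A})C^{\ast}$ minimizes $F_{p}(\cdot ; C\mathbb{A}C^{\ast})$. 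The minimizer for $p>1$ is unique, so
\[
M_{p}(C\mathbb{A}C^{\ast})=CM_{p}(\mathbb{A})C^{\ast}.
\]

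Finally, let $p \searrow 1$. Lemma \ref{mplimexists} guarantees that $M_{p}(\mathbb{A}) \to M(\mathbb{A})$. Since $\Gamma_{C}$ is continuous (it is an isometry, and also plainly continuous in the matrix topology), $CM_{p}(\mathbb{A})C^{\ast} \to CM(\mathbb{A})C^{\ast}$. By the previous step the left-hand side equals $M_{p}(C\mathbb{A}C^{\ast})$. Applying Lemma \ref{mplimexists} to the tuple $C\mathbb{A}C^{\ast}$, this net converges to $M(C\mathbb{A}C^{\ast})$. Uniqueness of limits gives the claim.

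There is no real obstacle here. The only point needing care is that the definition works through the unique minimizers $M_{p}$ rather than through the possibly non-unique minimizers of $F_{1}$. That is precisely why we argue at the level $p>1$ and only then pass to the limit, instead of working with $F_{1}$ directly.
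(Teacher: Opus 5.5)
Your proposal is correct and follows the paper's own argument: the paper says only that the result follows from the congruence invariance of $\delta$ together with the uniqueness of $M_{p}(\mathbb{A})$ for $p>1$. You spell out exactly that, namely $M_{p}(C\mathbb{A}C^{\ast})=CM_{p}(\mathbb{A})C^{\ast}$ for each $p>1$, then pass to the limit $p \searrow 1$ using the continuity of $X \mapsto CXC^{\ast}$.
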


By the congruence invariance, the joint homogeneity of the Riemannian median follows. 

\begin{cor}
Let $\mathbb{A}=(A_{1}, \cdots, A_{n}) \in (\mathbb{P}_{d})^{n}$. 
Then, 
\[
M(c\mathbb{A})=cM(\mathbb{A})
\]
holds for all $c>0$. 
Here, $c\mathbb{A}=(cA_{1}, \cdots, cA_{n})$.
\end{cor}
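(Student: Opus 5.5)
The plan is to apply Theorem~\ref{conginv} with the scalar matrix $C=\sqrt{c}\,I$, which lies in ${\mathrm{GL}}_{d}(\mathbb{C})$ for every $c>0$. For this choice,
\[
CA_{k}C^{\ast}=\sqrt{c}\,I\,A_{k}\,\sqrt{c}\,I=cA_{k}, \qquad k=1,\cdots,n,
\]
so $C\mathbb{A}C^{\ast}=c\mathbb{A}$. Likewise $CM(\mathbb{A})C^{\ast}=cM(\mathbb{A})$. Substituting both identities into $M(C\mathbb{A}C^{\ast})=CM(\mathbb{A})C^{\ast}$ gives $M(c\mathbb{A})=cM(\mathbb{A})$ immediately.

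Since Theorem~\ref{conginv} is stated without a written proof, I will also check briefly that it holds. The key fact is that $\delta(CXC^{\ast},CYC^{\ast})=\delta(X,Y)$, because congruences are isometries of the trace metric. It follows that $F_{p}(CXC^{\ast};C\mathbb{A}C^{\ast})=F_{p}(X;\mathbb{A})$ for every $X\in\mathbb{P}_{d}$ and every $p>1$.

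The map $X\mapsto CXC^{\ast}$ is a bijection of $\mathbb{P}_{d}$, and $M_{p}$ is the unique minimizer of $F_{p}$. Together these give $M_{p}(C\mathbb{A}C^{\ast})=CM_{p}(\mathbb{A})C^{\ast}$ for each $p>1$. Congruence is continuous, so letting $p\searrow1$ and using Definition~\ref{Riemed} (the limit exists by Lemma~\ref{mplimexists}) yields the theorem.

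There is no real obstacle in this argument. The only point needing care is that $M$ is defined through the limit of the $M_{p}$, not as an arbitrary minimizer of $F_{1}$. Working at the level of the unique $M_{p}$ before passing to the limit handles this.
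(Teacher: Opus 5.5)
Your proposal is correct and follows the paper's route: the corollary is Theorem~\ref{conginv} specialized to $C=\sqrt{c}\,I$. Your check of that theorem is also the justification the paper indicates: congruence invariance of $\delta$, then uniqueness of $M_{p}$ for $p>1$, then continuity of congruence as $p\searrow 1$.
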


It is also immediate that the Riemannian median has the self-duality 
since $\delta(A^{-1}, B^{-1})=\delta(A, B)$ holds for all $A, B \in \mathbb{P}_{d}$ {\cite{Bha062}}.

\begin{thm}
Let $\mathbb{A}=(A_{1}, \cdots, A_{n}) \in (\mathbb{P}_{d})^{n}$. 
Then, 
\[
M(\mathbb{A}^{-1})^{-1}=M(\mathbb{A}). 
\] 
holds, where $\mathbb{A}^{-1}=(A_{1}^{-1}, \cdots, A_{n}^{-1})$. 
\end{thm}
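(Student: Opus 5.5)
The argument runs at the level of the approximating minimizers $M_{p}$ and then passes to the limit $p \searrow 1$ from Definition \ref{Riemed}. The key fact is that inversion $\iota(X)=X^{-1}$ is an isometry of $(\mathbb{P}_{d}, \delta)$, since $\delta(A^{-1}, B^{-1})=\delta(A, B)$ \cite{Bha062}.

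\emph{Step 1: the approximating minimizers.} Fix $p \in (1, 2]$. For every $X \in \mathbb{P}_{d}$ the isometry property gives
\[
F_{p}(X^{-1} ; \mathbb{A}^{-1})=\frac{1}{n}\sum_{k=1}^{n}\delta(X^{-1}, A_{k}^{-1})^{p}=\frac{1}{n}\sum_{k=1}^{n}\delta(X, A_{k})^{p}=F_{p}(X ; \mathbb{A}).
\]
Since $\iota$ is a bijection of $\mathbb{P}_{d}$ onto itself, $X$ minimizes $F_{p}(\cdot ; \mathbb{A})$ if and only if $X^{-1}$ minimizes $F_{p}(\cdot ; \mathbb{A}^{-1})$. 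The minimizer $M_{p}(\mathbb{A}^{-1})$ is unique for $p>1$, so $M_{p}(\mathbb{A}^{-1})=M_{p}(\mathbb{A})^{-1}$.

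\emph{Step 2: passing to the limit.} By Lemma \ref{mplimexists}, both $M_{p}(\mathbb{A}) \to M(\mathbb{A})$ and $M_{p}(\mathbb{A}^{-1}) \to M(\mathbb{A}^{-1})$ as $p \searrow 1$. Inversion is continuous on $\mathbb{P}_{d}$, either as an isometry for $\delta$ or as matrix inversion in the usual topology; the two topologies agree on $\mathbb{P}_{d}$. Hence
\[
M(\mathbb{A}^{-1})=\lim_{p \searrow 1}M_{p}(\mathbb{A}^{-1})=\lim_{p \searrow 1}M_{p}(\mathbb{A})^{-1}=M(\mathbb{A})^{-1}.
\]
Inverting both sides gives $M(\mathbb{A}^{-1})^{-1}=M(\mathbb{A})$.

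There is no genuine obstacle here. The one point to watch is that the identity must be proved for $M_{p}$ and not for minimizers of $F_{1}$. Minimizers of $F_{1}$ need not be unique: in the even collinear case of Lemma \ref{even} they fill a whole segment. The isometry argument alone would only show that inversion maps the set of $F_{1}$-minimizers for $\mathbb{A}$ onto that for $\mathbb{A}^{-1}$. It would not show that the selected point $M(\mathbb{A})$ goes to $M(\mathbb{A}^{-1})$. Going through the unique $M_{p}$ and the limit avoids this, and the same pattern proves Theorem \ref{conginv}.
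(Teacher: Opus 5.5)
Your proof is correct and is the argument the paper intends. The paper states self-duality in one line from $\delta(A^{-1},B^{-1})=\delta(A,B)$, parallel to its congruence-invariance remark that rests on the uniqueness of $M_{p}(\mathbb{A})$ for $p>1$, and you have written that argument out in full: the identity for each $M_{p}$, then continuity of inversion as $p \searrow 1$, together with the correct warning that working only with the possibly non-unique minimizers of $F_{1}$ would not suffice.
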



\subsection{Monotonicity}

In this subsection, 
we will deal with the monotonicity of the Riemannian median. 
Let $\sigma(A)$ denote the spectrum of a matrix $A$. 
First, we show the monotonicity in a special case which corresponds to the case of considering the median on $\mathbb{R}$. 

\begin{thm}
Let $\mathbb{A}=(A_{1}, \cdots, A_{n}), \mathbb{B}=(B_{1}, \cdots, B_{n}) \in (\mathbb{P}_{d})^{n}$.  
If any $A_{i}$ and $B_{j}$ lie on a common geodesic $\gamma$ and 
$A_{k} \leq B_{k}$ for all $k=1, \cdots, n$, 
then 
\[
M(\mathbb{A}) \leq M(\mathbb{B}) 
\]
holds. 
\end{thm}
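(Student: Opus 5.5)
Throughout, all $2n$ matrices $A_1,\dots,A_n,B_1,\dots,B_n$ lie on one geodesic, which I parametrize as $\gamma(t)=P\#_{t}Q$ with $t\in\mathbb{R}$. The plan is to push the whole problem onto the real line through this parametrization. Then I check that the Loewner order along $\gamma$ is just the order of the real parameter, and finally apply monotonicity of the scalar median as defined through Landers and Rogge's limit.

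\textbf{Step 1 (order along $\gamma$).} By congruence invariance I may take $P=I$. Then $\gamma(t)=Q^{t}=\exp(tH)$ with $H=\log Q$ Hermitian. All points of $\gamma$ commute. Since $\exp$ is monotone in the scalar variable, $\gamma(s)\le\gamma(t)$ in the Loewner order if and only if $e^{s\lambda}\le e^{t\lambda}$ for every eigenvalue $\lambda$ of $H$, i.e.\ $(t-s)\lambda\ge 0$ for all $\lambda\in\sigma(H)$.

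Here lies the main obstacle: if $H$ has eigenvalues of both signs, then no two distinct points of $\gamma$ are comparable. In that case the hypothesis $A_k\le B_k$ forces $A_k=B_k$ for all $k$, so $\mathbb{A}=\mathbb{B}$ and the claim is trivial. Otherwise I may assume $\sigma(H)\subset[0,\infty)$, replacing $t$ by $-t$ if necessary. If $H=0$ the geodesic is constant and there is nothing to prove. So take $H\ge0$, $H\ne0$. Then $t\mapsto\gamma(t)$ is injective and order preserving, and $\gamma(s)\le\gamma(t)$ if and only if $s\le t$.

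\textbf{Step 2 (reduction to scalars).} Write $A_k=\gamma(a_k)$ and $B_k=\gamma(b_k)$. Step 1 gives $a_k\le b_k$ for every $k$. Because $\gamma$ is a geodesic, $\delta(\gamma(s),\gamma(t))=c|s-t|$ with $c=\|H\|_2>0$. As in Lemma~\ref{minongamma} and the second half of the proof of Lemma~\ref{mplimexists}, the minimizer $M_p(\mathbb{A})$ lies on $\gamma$. It equals $\gamma(m_p(a))$, where $m_p(a)$ is the unique minimizer of $g_p(t)=\frac1n\sum_k|t-a_k|^p$. The same holds for $\mathbb{B}$. Hence $M(\mathbb{A})=\gamma(m(a))$ with $m(a)=\lim_{p\searrow1}m_p(a)$, and likewise $M(\mathbb{B})=\gamma(m(b))$.

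\textbf{Step 3 (scalar monotonicity).} It remains to show $m_p(a)\le m_p(b)$ for each $p>1$; the inequality then passes to the limit, and Step 1 gives $M(\mathbb{A})\le M(\mathbb{B})$. For $p>1$, $m_p(a)$ is the unique zero of the strictly increasing continuous function
\[
\phi_a(t)=\sum_k \operatorname{sgn}(t-a_k)|t-a_k|^{p-1}.
\]
Each summand is nonincreasing in $a_k$, so $\phi_b(t)\le\phi_a(t)$ for all $t$. At $t=m_p(a)$ this gives $\phi_b(m_p(a))\le\phi_a(m_p(a))=0$. Since $\phi_b$ is increasing with zero at $m_p(b)$, it follows that $m_p(a)\le m_p(b)$.

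I expect Step 1 to be the delicate part. One must see that the Loewner order restricted to a geodesic is either trivial or a linear order matching the parameter. That is exactly where the hypothesis that all the $A_i$ and $B_j$ share a common geodesic is used.
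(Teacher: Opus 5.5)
Your proposal is correct and takes essentially the same route as the paper. Both arguments identify the Loewner order along $\gamma$ with the order of the real parameter, then compare the derivatives $\frac{p}{n}\sum_k \operatorname{sgn}(t-a_k)|t-a_k|^{p-1}$ and their $b$-analogues to get $m_p(a)\le m_p(b)$ for each $p>1$ before letting $p\searrow 1$ (the paper extracts $P^{-1/2}QP^{-1/2}\ge I$ from one pair $A_{k_0}\ne B_{k_0}$, while your sign analysis of $\sigma(H)$ makes explicit that an indefinite direction forces $\mathbb{A}=\mathbb{B}$, and you pass to the limit in the scalar parameters rather than in $M_p(\mathbb{A})\le M_p(\mathbb{B})$).
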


\begin{proof}
$If$ $A_{k}=B_{k}$ for every $k=1, \cdots, n$, then the conclusion is immediate
Hence, we may assume that there exists $k_{0}$ such that $A_{k_{0}} \neq B_{k_{0}}$. 
By this assumption, there exist $P, Q \in \{A_{1}, \cdots, A_{n}, B_{1}, \cdots, B_{n}\}$ with $P \neq Q$ and 
\[
\gamma(t)=P \#_{t/D} Q, \ t \in [0, D]
\]
such that $A_{1}, \cdots, A_{n}, B_{1}, \cdots, B_{n}$ lie on $\gamma([0, D])$, where $D:=\delta(P, Q)$. 
Here, there exist sequences $\{a_{k}\}_{k=1}^{n}$ and $\{b_{k}\}_{k=1}^{n}$ in $[0, D]$ such that 
$A_{k}=\gamma(a_{k}) \ \text{and} \ B_{k}=\gamma(b_{k})$ 
for all $k=1, \cdots, n$. 
Reversing $P$ and $Q$ if necessary, we may assume that $a_{k_{0}}<b_{k_{0}}$. 

We show that $s \leq t$ if and only if $\gamma(s) \leq \gamma(t)$. 
By assumption, we have 
\[
P^{1/2}(P^{-1/2}QP^{-1/2})^{a_{k_{0}}}P^{1/2}=A_{k_{0}} \leq B_{k_{0}}=P^{1/2}(P^{-1/2}QP^{-1/2})^{b_{k_{0}}}P^{1/2}, 
\]
which yields $(P^{-1/2}QP^{-1/2})^{a_{k_{0}}} \leq (P^{-1/2}QP^{-1/2})^{b_{k_{0}}}$. 
Since these two matrices commute, this is equivalent to $I \leq (P^{-1/2}QP^{-1/2})^{b_{k}-a_{k}}$. 
Thus, we obtain $P^{-1/2}QP^{-1/2} \geq I$ since $b_{k}-a_{k}>0$. 
Note that $P^{-1/2}QP^{-1/2} \neq I$ since $P \neq Q$. 
Hence, $s \leq t$ implies $(P^{-1/2}QP^{-1/2})^{s} \leq (P^{-1/2}QP^{-1/2})^{t}$, which is equivalent to $\gamma(s) \leq \gamma(t)$. 
The converse direction is trivial. 

Let $p>1$. 
It can be shown that the minimizer $M_{p}(\mathbb{A})$ of $F_{p}(\cdot ; \mathbb{A})$ belongs to $\gamma([0, D])$ 
in the same manner in Lemma $\ref{minongamma}$. 
Thus, there exist $m_{p, A}$ and $m_{p, B}$ in $[0, D]$ such that $M_{p}(\mathbb{A})=\gamma(m_{p, A})$ and $M_{p}(\mathbb{B})=\gamma(m_{p, B})$. 
On the other hand, since $\delta(\gamma(s), \gamma(t))=|s-t|$ for all $s, t \in [0, D]$, 
$F_{p}(X ; \mathbb{A})$ and $F_{p}(X ; \mathbb{B})$ on $\gamma([0, D])$ 
is equivalent to minimizing $f_{p}(x ; \{a_{k}\}_{k=1}^{n})=(1/n)\sum_{k=1}^{n}|x-a_{k}|$ and $f_{p}(x ; \{b_{k}\}_{k=1}^{n})=(1/n)\sum_{k=1}^{n}|x-b_{k}|$ on $[0, D]$. 
Thus, $m_{p, A}$ and $m_{p, B}$ are unique minimizers of $f_{p}(x ; a)$ and $f_{p}(x ; b)$, respectively. 

Observe that the function $\varphi_{p}$ with $p>1$ defined by $\varphi_{p}(x)={\rm sgn}(x)|x|^{p-1}$ is monotonically increasing. 
Since $t-a_{k} \geq t-b_{k}$ holds for $k=1, \cdots, n$ and $t \in [0, D]$, 
it follows that $\varphi_{p}(t-a_{k}) \geq \varphi_{p}(t-b_{k})$ holds for $k=1, \cdots, n$ and $t \in [0, D]$. 
Hence, 
\[
\frac{d}{dt}f_{p}(x ; \{a_{k}\}_{k=1}^{n})
=\frac{p}{n}\sum_{k=1}^{n}\varphi_{p}(t-a_{k})
\geq \frac{p}{n}\sum_{k=1}^{n}\varphi_{p}(t-b_{k})
=\frac{d}{dt}f_{p}(x ; \{b_{k}\}_{k=1}^{n})
\]
holds for any $x \in [0, D]$. 
Especially, 
\[
\frac{d}{dt}f_{p}(m_{p, A} ; \{b_{k}\}_{k=1}^{n}) \leq \frac{d}{dt}f_{p}(m_{p, A} ; \{a_{k}\}_{k=1}^{n})
=0 
=\frac{d}{dt}f_{p}(m_{p, B} ; \{b_{k}\}_{k=1}^{n})
\]
holds. 
Since $\frac{d}{dt}f_{p}(x ; \{b_{k}\}_{k=1}^{n})$ is monotonically increasing and $m_{p, B}$ is its unique zero, 
we obtain $m_{p, A} \leq m_{p, B}$, and hence, 
\[
M_{p}(\mathbb{A})=\gamma(m_{p, A}) \leq \gamma(m_{p, B})=M_{p}(\mathbb{B})
\] 
holds for all $p>1$. 
Therefore, by Definition $\ref{Riemed}$, it follows that $M(\mathbb{A}) \leq M(\mathbb{B})$. 
\end{proof}

We give an example which shows the monotonicity of the Riemannian median does not hold in general. 
Recall that the Fermat point of a triangle $\triangle \mathrm{ABC}$ in $\mathbb{R}^{2}$ 
is a point $\mathrm{M}$ which minimizes $\mathrm{AM}+\mathrm{BM}+\mathrm{CM}$. 
See \cite{Spa96} for more details of the Fermat point.  
Here is the counter example. 

\begin{exm}
Let 
\begin{align*}
P_{A}:=\begin{bmatrix} 1 & 0 \\ 0 & 1 \end{bmatrix}, \ 
P_{B}:=\begin{bmatrix} e^{2} & 0 \\ 0 & e^{4} \end{bmatrix}, \ 
P_{C}:=\begin{bmatrix} e^{10} & 0 \\ 0 & 1 \end{bmatrix}, \ 
P_{D}:=\begin{bmatrix} e^{10} & 0 \\ 0 & e^{10} \end{bmatrix}. 
\end{align*}
Since these matrices commute, we have 
\[
M(P_{A}, P_{B}, P_{C})=\begin{bmatrix} e^{\frac{14+10\sqrt{3}}{13}} & 0 \\ 0 & e^{\frac{54+20\sqrt{3}}{39}} \end{bmatrix}, \ 
M(P_{A}, P_{B}, P_{D})=\begin{bmatrix} e^{2} & 0 \\ 0 & e^{4} \end{bmatrix}. 
\]
We obtain these medians by computing the Fermat points of triangles in $\mathbb{R}^{2}$ 
with their vertices $(0, 0), (2, 4), (10, 0)$ and $(0, 0), (2, 4), (10, 10)$, respectively. 
Here, observe that $(14+10\sqrt{3})/13>2$, and hence $M(P_{A}, P_{B}, P_{C}) \leq M(P_{A}, P_{B}, P_{D})$ \underline{does not hold} 
though $P_{C} \leq P_{D}$ hold. 
This means that the monotonicity of the Riemannian median does not hold in general. 
\end{exm}


\subsection{Weighted Riemannian median}

Let $w \in \mathbb{R}^{n}$ with $\sum_{k=1}^{n}w_{k}=1$ and $w_{k}>0$ for all $k=1, \cdots, n$. 
It is shown that minimizers of the function 
\[
F_{1}(X ; \mathbb{A}, w)=\sum_{k=1}^{n}w_{k}\delta(X, A_{k}). 
\]
exist by Lemma $\ref{pfofexmnmzr}$. 
Using Landers and Rogges's work \cite{Lan81} and 
the existence of and uniqueness of minimizer $M_{p}(w ; \mathbb{A})$ of the function $F_{p}(X ; \mathbb{A} ; w):=\sum_{k=1}^{n}w_{k}\delta(X, A_{k})^{p}$,   
we can show that $\lim_{p \searrow 1}M_{p}(w ; \mathbb{A})$ exists. 
Therefore, it is also possible to define the Riemannian median with weight. 

Here, unlike the case that weight is $[1/n, \cdots, 1/n]$, 
note that the uniqueness of a minimizer of $F_{1}(\cdot ; \mathbb{A}, w)$ cannot be shown 
even if $n$ is odd when $A_{1}, \cdots, A_{n}$ lie on a common geodesic and 
there exists an $m$ such that $\sum_{k=1}^{m}w_{k}=\sum_{k=m+1}^{n}w_{k}=1/2$.  
Also, note that the uniqueness of minimizer of $F_{1}(\cdot ; \mathbb{A}, w)$ can be shown 
even if $n$ is even and $A_{1}, \cdots, A_{n}$ lie on a common geodesic when 
there does not exist $m$ such that $\sum_{k=1}^{m}w_{k}=\sum_{k=m+1}^{n}w_{k}=1/2$. 
Moreover, by some modification, 
we can show the congruence invariance, the self-duality, the monotonicity in the case that all data points lie on a common geodesic 
and inequality
\[
\delta(M(w ; \mathbb{A}), \Lambda(w ; \mathbb{A})) \leq \sqrt{\sum_{k=1}^{n}w_{k}\delta(\Lambda(w ; \mathbb{A}), A_{k})}, 
\]
where $M(w ; \mathbb{A})=\lim_{p \searrow 1}M_{p}(w ; \mathbb{A})$ and $\Lambda(w ; \mathbb{A})$ is the $w$-weighted Karcher mean. 


{
\renewcommand{\addcontentsline}[3]{}

\section*{Acknowledgements}

The author would like to express my gratitude to Professor Yoshimichi Ueda for editorial supports, comments, 
his encouragement and giving the author information of references.
The author gratefully acknowledges Professor Fumio Hiai who kindly read a previous version of these notes 
and gave many fruitful comments, which enabled the author to improve this work. 
}


\end{document}